\newcounter{theorem}
\newtheorem{theorem}[theorem]{Theorem}
\newtheorem{lemma}[theorem]{Lemma}
\newtheorem{proposition}[theorem]{Proposition}
\theoremstyle{definition}
\newtheorem*{remark*}{Remark}
\newcommand{\centredrelation}[2]{{}_{\phantom{#2}}#1_{#2}}
\newcommand{\capprox}[1]{\centredrelation{\approx}{#1}}
\numberwithin{equation}{section}
\newcommand{\e}{\epsilon}
\newcommand{\tr}{\mathrm{tr}}
\title{Classifying maps into uniform tracial sequence algebras}
\author[J.\ Castillejos]{Jorge Castillejos}
\address{\hskip-\parindent Jorge Castillejos, Institute of Mathematics, Polish Academy of Sciences, ul. {\'S}niadeckich 8, 00-656 Warszawa, Poland}
\email{jcastillejoslopez@impan.pl}
\author[S.\ Evington]{Samuel Evington}
\address{\hskip-\parindent Samuel Evington, Mathematical Institute, University of Oxford, Oxford, OX2 6GG, UK.}
\email{Samuel.Evington@maths.ox.ac.uk}
\author[A.\ Tikuisis]{Aaron Tikuisis}
\address{\hskip-\parindent Aaron Tikuisis, Department of Mathematics and Statistics, University of Ottawa, Ottawa, K1N 6N5, Canada.}
\email{aaron.tikuisis@uottawa.ca}
\author[S.\ White]{Stuart White}
\address{\hskip-\parindent Stuart White, Mathematical Institute, University of Oxford, Oxford, OX2 6GG, UK.}
\email{stuart.white@maths.ox.ac.uk}
\thanks{Research partially supported by: European Research Council Consolidator Grant 614195 RIGIDITY (JC); long term structural funding -- a Methusalem grant of the Flemish Government (JC); EPSRC grant EP/R025061/2 (SE, SW); an NSERC discovery grant (AT)}
\begin{document}
\maketitle
\begin{abstract}
	We classify $^*$-homomorphisms from nuclear $C^*$-algebras into uniform tracial sequence algebras of nuclear $\mathcal Z$-stable $C^*$-algebras via tracial data.
\end{abstract}

\renewcommand*{\thetheorem}{\Alph{theorem}}
\section*{Introduction}

Over the last 10 years, the application of von Neumann techniques has been a major theme in the structure theory of simple nuclear $C^*$-algebras through the pioneering work of Matui and Sato \cite{MS12,MS14}.  A starting point for a number of these applications is the following well-known consequence of Connes' revolutionary work on the characterisation of hyperfinite von Neumann algebras \cite{Co76}: maps from a separable nuclear $C^*$-algebra $A$  into $\mathcal R^\omega$ (the ultrapower of the hyperfinite II$_1$ factor) are classified up to unitary equivalence by the trace they induce on $A$ (see for example \cite[Proposition 2.1]{CGNN13}).  Most recently, this result played a key role in Schafhauser's breakthrough new approach to the classification of monotracial separable nuclear $C^*$-algebras which absorb the universal uniformly hyperfinite algebra tensorially (\cite{Sch:Ann}).   

For a nuclear $C^*$-algebra $B$ with a unique trace, Connes' theorem allows us to view $\mathcal R^\omega$ as a tracial ultrapower of $B$.  When $B$ has multiple traces a somewhat different reduced product construction is needed in order to be able to handle them all uniformly. This led to the uniform tracial ultrapower $B^\omega$, 
formalised in \cite{CETWW}. This, and its precursor in terms of ultraproducts of $W^*$-bundles (\cite{BBSTWW,Oz13}), has been a crucial tool in recent developments.  In particular, in our recent joint work with Winter (\cite{CETWW}), we introduced a new tool --- complemented partitions of unity (CPoU) --- for studying these ultraproducts, and used this to show that Jiang--Su stability and finite nuclear dimension are equivalent in the Toms--Winter conjecture (\cite[Theorem A]{CETWW}). 

The principal purpose of this paper is to use the new techniques in \cite{CETWW} to classify maps from separable nuclear $C^*$-algebras into uniform tracial sequence algebras of $\mathcal Z$-stable nuclear $C^*$-algebras, analogous to the consequence of Connes' result for maps into $\mathcal R^\omega$. In the theorem which follows, the uniform tracial sequence algebra $B^\infty$ associated to $B$ is the $C^*$-algebra of bounded sequences in $B$, modulo those converging to zero uniformly over all trace norms. So classifying maps from $A$ to $B^\infty$ up to unitary equivalence is a way of encoding a classification of uniform trace norm approximately multiplicative maps from $A$ into $B$ up to approximate unitary equivalence in uniform trace norm.

\begin{theorem}\label{thm:MainThm}
Let $A$ be a separable nuclear $C^*$-algebra and let $B$ be a separable nuclear $\mathcal Z$-stable $C^*$-algebra with $T(B)$ compact and non-empty.
Then for any continuous affine function $\alpha:T(B^\infty) \to T(A)$, there exists a $^*$-ho\-mo\-mor\-phism $\phi:A \to B^\infty$ which induces $\alpha$.
Moreover, $\phi$ is unique up to unitary equivalence.
\end{theorem}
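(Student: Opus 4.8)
The plan is to reduce both halves of the statement to the corresponding facts about maps into $\mathcal{R}^\omega$ --- the existence and uniqueness up to unitary equivalence of trace-preserving embeddings of a separable nuclear algebra, furnished by Connes' theorem (in the form of \cite[Proposition 2.1]{CGNN13}) --- and then to promote these fibrewise (i.e.\ trace-by-trace) statements to statements holding uniformly over all of $T(B^\infty)$ by means of complemented partitions of unity (CPoU). The hypothesis that $B$ is $\mathcal{Z}$-stable enters exactly here: it ensures that the relevant tracial von Neumann completions have McDuff fibres, so that Connes' theorem applies in each fibre, and it is what makes CPoU available to drive the patching.

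For existence I would first pass to the uniform tracial von Neumann completion $\mathcal{M}$ of $B^\infty$, a finite von Neumann algebra whose fibres over the extreme traces are McDuff by $\mathcal{Z}$-stability. Using the injectivity coming from nuclearity of $A$ together with Connes' theorem applied in each fibre --- arranging that the fibre at $\tau$ realises the trace $\alpha(\tau)$ on $A$ --- one assembles a trace-preserving normal $^*$-homomorphism $A \to \mathcal{M}$ inducing $\alpha$, where the continuity and affineness of $\alpha$ are precisely what allow the fibrewise data to be glued over the compact base $T(B)$. The more delicate step is to lift this completion-level map back to a genuine $^*$-homomorphism $A \to B^\infty$: since $B^\infty$ is trace-norm dense in $\mathcal{M}$, one approximates it by completely positive contractive, approximately multiplicative, approximately trace-compatible maps $A \to B^\infty$, uses CPoU to make these approximations uniform over $T(B^\infty)$, and then applies a standard diagonal reindexing to obtain the honest $^*$-homomorphism $\phi$.

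For uniqueness, suppose $\phi,\psi:A\to B^\infty$ both induce $\alpha$. It suffices to produce, for each finite $F\subseteq A$ and each $\e>0$, a unitary $u\in B^\infty$ with $u\phi(a)u^*\approx\psi(a)$ in uniform trace norm for $a\in F$, since a reindexing argument upgrades such approximate unitary equivalences to an exact one. In each fibre $\phi$ and $\psi$ induce the same trace, so Connes' theorem provides a conjugating unitary at every $\tau$; by continuity and compactness of $T(B)$, finitely many unitaries $u_1,\dots,u_k\in B^\infty$ suffice, in the sense that for every $\tau$ at least one $u_j$ conjugates $\phi$ to within $\e$ of $\psi$ at $\tau$ on $F$. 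Encoding the failure of $u_j$ at $\tau$ as positive elements $e_j\in B^\infty$ with $\min_j\tau(e_j)$ uniformly small, I would apply CPoU to obtain orthogonal projections $p_1,\dots,p_k\in B^\infty\cap\phi(A)'\cap\psi(A)'$ with $\sum_j p_j=1$ and $\tau(e_jp_j)$ uniformly small. Then $u:=\sum_j u_jp_j$ is a unitary commuting appropriately with the images, and $u\phi(a)u^*=\sum_j u_j\phi(a)u_j^*p_j$ is uniformly trace-norm close to $\sum_j\psi(a)p_j=\psi(a)$, as required.

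The main obstacle throughout is this uniformity over the whole trace space: fibrewise the problem is exactly Connes' theorem, but a priori there is no reason the fibrewise solutions vary continuously or are realised by a single element of $B^\infty$. CPoU is the device that resolves this, converting a family of local solutions --- each valid on part of $T(B^\infty)$ --- into one global solution; and the technical heart of the argument is the verification that the complemented projections can be taken in the appropriate relative commutant, so that $u=\sum_j u_jp_j$ is genuinely unitary and the conjugation estimate holds uniformly.
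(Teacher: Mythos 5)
Your uniqueness argument is essentially the paper's own proof (Theorem \ref{thm:Uniqueness}): apply the Connes/\!\cite{CGNN13} classification in each GNS fibre $\pi_\tau(B^\infty)''$, use weak$^*$-compactness to extract finitely many unitaries, use CPoU (Lemma \ref{lem:CPoU-Infinity}) to produce projections in the relative commutant of $\phi(\mathcal F)\cup\psi(\mathcal F)\cup\{u_{\tau_1},\dots,u_{\tau_k}\}$, patch $u=\sum_j e_j u_{\tau_j}$, and upgrade approximate to exact unitary equivalence by Kirchberg's $\e$-test. The only point you gloss over is that the fibrewise unitaries live in $\pi_\tau(B^\infty)''$ and must be pulled back into $B^\infty$ (Kaplansky density plus lifting, as in Proposition \ref{unitaryexp}); this is routine.

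The existence half, however, has a genuine gap at exactly the hard point. First, the assertion that the fibrewise Connes-type embeddings ``assemble'' into a trace-preserving map into a global tracial completion because $\alpha$ is continuous and affine is precisely the problem to be solved, not an input; no mechanism is given. Second, and more fundamentally, CPoU can only glue conditions of the form ``$\tau(a_i)$ is small for every limit trace $\tau$'' for \emph{fixed} positive elements $a_i$, and trace-compatibility is not such a condition: if each local map $\phi_i$ satisfies $\tau_i\circ\phi_i\approx\alpha(\tau_i)$ only at the single trace it was built for, then after patching $\phi(\cdot)=\sum_i e_i\phi_i(\cdot)$ an arbitrary trace $\tau$ evaluates each piece against the compressed traces $\tau(e_i\,\cdot)/\tau(e_i)$, about which you know nothing, so the patched map has no reason to induce $\alpha$. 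The paper resolves this with three ingredients your proposal lacks: (a) Cuntz--Pedersen (Proposition \ref{prop:CP}) fixes self-adjoint elements $c_a\in B^\infty$ with $\tau(c_a)=\alpha(\tau)(a)$ for all $\tau$; (b) in each fibre the approximating map is made compatible with \emph{all} traces of that fibre simultaneously, by gluing over the centre using the centre-valued trace (Lemma \ref{l:existence2}); (c) this compatibility is converted into an algebraic certificate --- $\eta\circ\theta(a)-c_a$ lies within $\e$ of a sum of ten commutators, by Fack--de la Harpe \cite{FH80} --- which survives cutting by the CPoU projections and is annihilated by every trace. Moreover, CPoU only controls limit traces, so one still needs the ``no silly traces'' result (Proposition \ref{prop:nosillytraces}), itself a nontrivial CPoU-plus-commutators argument, to pass from $T_\infty(B)$ to all of $T(B^\infty)$. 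Finally, a smaller but real issue: you cannot ask your local approximations to be approximately multiplicative in norm, since for merely amenable (rather than quasidiagonal) traces this is unavailable; the paper instead produces approximately \emph{order zero} maps via quasidiagonality of cone traces \cite{BCW16}, and recovers multiplicativity only at the very end, from the order zero structure together with the trace condition and compactness of $T(B)$.
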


 Just as the classification of embeddings from separable nuclear $C^*$-algebras into $\mathcal R^\omega$ is vital in \cite{Sch:Ann}, Theorem \ref{thm:MainThm} will form the starting point of the forthcoming joint work of Carri\'on, Gabe, Schafhauser and the last two named authors which will give an abstract approach to the classification of simple separable unital nuclear Jiang--Su stable $C^*$-algebras satisfying the UCT (\cite{CGSTW}).  Our reason for setting up Theorem \ref{thm:MainThm} with the uniform trace norm sequence algebra $B^\infty$ as opposed to the uniform tracial ultrapower $B^\omega$ is so it can be applied exactly as written in \cite{CGSTW}.\footnote{One can obtain an ultrapower version of Theorem \ref{thm:MainThm} by working with an ultrapower $B^\omega$ in place of the sequence algebra $B^\infty$ throughout the paper.}

The role of nuclearity and $\mathcal Z$-stability of $B$ in Theorem \ref{thm:MainThm} is to obtain CPoU from \cite[Theorem I]{CETWW} --- the main technical result of that work.  Although the defining property of CPoU is in the form of the existence of certain partitions of unity for trace spaces, its principal consequence is a local to global tracial type-satisfaction process for $B^\infty$. In less fancy language, this means that if (suitable) properties hold approximately in trace in each tracial GNS-representation of $B^\infty$, then they hold exactly in $B^\infty$.  This gives $B^\infty$ a von Neumann algebra-like flavour (though it is certainly not a von Neumann algebra).  Theorem \ref{thm:MainThm} is obtained in this fashion: we glue together the classification of maps from separable nuclear $C^*$-algebras into finite von Neumann algebras from Connes' theorem over all traces using CPoU.  

We also record in Proposition \ref{unitaryexp}, another, somewhat easier, application of CPoU for use in \cite{CGSTW}, which showcases another von Neumann algebra-like property: every unitary in $B^\infty$ is an exponential.

\subsection*{Acknowledgements} The authors would like to thank the referee for their careful reading of earlier versions of this paper and helpful comments.

\numberwithin{theorem}{section}
\section{Preliminaries}

Let $B$ be a $C^*$-algebra.
We let $T(B)$ denote the set of tracial states (which we abbreviate as ``traces'') on $B$.
For $\tau \in T(B)$, we define the associated 2-seminorm on $B$ by
\begin{equation} \|b\|_{2,\tau} := \sqrt{\tau(|b|^2)}. \end{equation}
We let $\pi_\tau:B \to \mathcal B(\mathcal H_\tau)$ be the GNS representation associated to $\tau$, and continue to use $\|\cdot\|_{2,\tau}$ to denote the induced 2-norm on $\pi_\tau(B)''$.

Define the uniform tracial sequence algebra
\begin{equation} B^\infty:=\ell^\infty(B)/\{(b_n)_{n=1}^\infty: \lim_{n\to\infty} \sup_{\tau \in T(B)} \|b_n\|_{2,\tau}=0\}. \end{equation}
We will typically use representative sequences in $\ell^\infty(B)$ to denote elements of $B^\infty$.

The ultraproduct versions of these sequence algebras are obtained using a free ultrafilter $\omega\in \beta\mathbb N\setminus\mathbb N$ in place of $\infty$, and many of their basic properties are the same.  For example, when $B$ is separable and $T(B)$ is non-empty and compact, $B^\infty$ is unital, with the unit represented by an approximate unit $(e_n)_{n=1}^\infty$ for $B$ in just the same way as the ultrapower version of this result (\cite[Proposition 1.11]{CETWW}).

Given a sequence $(\tau_n)_{n=1}^\infty$ in $T(B)$ and a free ultrafilter $\omega$, define the associated \emph{limit trace} $\tau:B^\infty \to \mathbb C$ by
\begin{equation} \tau((b_n)_{n=1}^\infty) := \lim_{n\to\omega} \tau_n(b_n). \end{equation}
We let $T_\infty(B)$ denote the set of all limit traces on $B^\infty$. 

On $B^\infty$, one has a uniform $2$-norm\footnote{To see it is a norm, use \eqref{eq:2uNormFormula}.} $\|\cdot\|_{2,T_\infty(B)}$ given by
\begin{equation} \|b\|_{2,T_\infty(B)}:=\sup_{\tau \in T_\infty(B)} \|b\|_{2,\tau},\quad b\in B^\infty. \end{equation}
More explicitly, for $b=(b_n)_{n=1}^\infty \in B^\infty$, one has
\begin{equation} \label{eq:2uNormFormula}
\|b\|_{2,T_\infty(B)}=\limsup_n \sup_{\tau \in T(B)} \|b_n\|_{2,\tau}. \end{equation}

In this paper we will frequently use Kirchberg's $\e$-test, which appears as \cite[Lemma A.1]{Kir06}.
However, we need a slightly different version, as we work with ``sequence algebras'' rather than ultrapowers; we state here the version we need, and remark that the proof is nearly identical to that of \cite[Lemma A.1]{Kir06}.

\begin{lemma}[{Kirchberg's $\e$-test, (\cite{Kir06})}]
\label{lem:EpsTest}
	Let $(X_n)_{n=1}^\infty$ be a sequence of non-empty sets and for $k,n\in\mathbb N$, let $f^{(k)}_n:X_n\rightarrow [0,\infty]$ be a function.  Define functions $f^{(k)}:X_1\times X_2 \times \cdots \to [0,\infty]$ by
	\begin{equation} 
	\label{eq:EpsTest1}
	f^{(k)}(x_1,x_2,\dots) := \limsup_{n} f^{(k)}_n(x_n). \end{equation}
	If for every $\epsilon>0$ and $k_0\in\mathbb N$ there exists $x\in X_1\times X_2 \times \cdots$ such that $f^{(k)}(x)<\epsilon$ for $k=1,\dots,k_0$, then there exists $y\in X_1 \times X_2 \times \cdots$ such that $f^{(k)}(y)=0$ for all $k\in\mathbb N$.
\end{lemma}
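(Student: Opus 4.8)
The plan is to build the desired sequence $y=(y_n)$ by a diagonal argument, exploiting the one structural feature of $\limsup$ that we have available: the statement $f^{(k)}(x)=\limsup_n f^{(k)}_n(x_n)<\epsilon$ forces $f^{(k)}_n(x_n)<\epsilon$ for \emph{all sufficiently large} $n$. This ``eventually'' behaviour is what replaces the ``on a set in the ultrafilter'' behaviour used in the original version in \cite{Kir06}.

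First I would feed the hypothesis the parameters $\epsilon=1/m$ and $k_0=m$, for each $m\in\mathbb N$, obtaining an element $x^{(m)}=(x^{(m)}_n)_{n=1}^\infty\in X_1\times X_2\times\cdots$ with
\begin{equation}
f^{(k)}(x^{(m)})=\limsup_n f^{(k)}_n(x^{(m)}_n)<1/m,\qquad k=1,\dots,m.
\end{equation}
Since this is a statement about finitely many ($m$ of them) strict $\limsup$ inequalities, I can choose an integer $N_m$ for which $f^{(k)}_n(x^{(m)}_n)<1/m$ holds simultaneously for all $n\geq N_m$ and all $k=1,\dots,m$; replacing the $N_m$ by a strictly increasing subsequence if necessary, I may assume $N_1<N_2<\cdots$. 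Then I would define the diagonal sequence by setting $y_n:=x^{(m)}_n$ whenever $N_m\leq n<N_{m+1}$, and choosing $y_n$ arbitrarily for $n<N_1$ (possible as each $X_n$ is non-empty).

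Finally I would verify $f^{(k)}(y)=0$ for each fixed $k$. Given $\delta>0$, pick $M\geq k$ with $1/M<\delta$. For any $n\geq N_M$ the index $n$ lies in some block $[N_m,N_{m+1})$ with $m\geq M\geq k$, so $y_n=x^{(m)}_n$ and, as $k\leq m$ and $n\geq N_m$, we get $f^{(k)}_n(y_n)<1/m\leq 1/M<\delta$. Hence $\limsup_n f^{(k)}_n(y_n)\leq\delta$, and letting $\delta\to 0$ yields $f^{(k)}(y)=0$, as required.

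The argument is essentially routine, and the only point that needs genuine care is precisely the switch from the ultrapower setting to the sequence-algebra setting. With an ultrafilter limit one controls values on large ultrafilter sets; here one must instead arrange that, as $n$ grows, the diagonal always samples approximants $x^{(m)}$ with $m\to\infty$, so that the error bound $1/m$ genuinely tends to $0$ rather than merely staying small. The strictly increasing choice of the $N_m$ and the block construction are exactly what guarantee this, and I expect this to be the one place where the proof differs in substance from \cite[Lemma A.1]{Kir06}.
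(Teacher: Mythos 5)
Your proof is correct: the diagonal block construction, taking $y_n:=x^{(m)}_n$ for $N_m\leq n<N_{m+1}$ with the $N_m$ strictly increasing, genuinely forces the sampled approximation index $m$ to tend to infinity with $n$, and the verification that $f^{(k)}(y)=0$ for each fixed $k$ is sound. The paper gives no written proof, remarking only that the argument is nearly identical to the ultrafilter version \cite[Lemma A.1]{Kir06}, and your argument is precisely that adaptation---with ``for all sufficiently large $n$'' (tails) replacing ``on a set in the ultrafilter''---so it follows the intended approach.
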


We next record a useful fact, which is a direct consequence of Cuntz and Pedersen's investigation of traces on $C^*$-algebras (\cite[Proposition 2.7]{CP79}) and the fact that the weak$^*$-continuous functionals on the dual of a Banach space correspond precisely to elements of the Banach space.

\begin{proposition}[{Cuntz--Pedersen}]
	\label{prop:CP}
	Let $A$ be a $C^*$-algebra and let $f:T(A) \to \mathbb R$ be a continuous affine function.
	Then there is a self-adjoint element $a \in A$ such that
	\begin{equation} \tau(a)=f(\tau),\quad \tau \in T(A). \end{equation}
\end{proposition}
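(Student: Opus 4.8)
The plan is to identify the given affine function with a weak$^*$-continuous functional on the self-adjoint trace space and then appeal to the two facts quoted before the statement. Write $A_{\mathrm{sa}}$ for the self-adjoint part of $A$ and let $A_0 \subseteq A_{\mathrm{sa}}$ be the closed real-linear span of the self-adjoint commutators $x^*x - xx^*$, $x \in A$. A bounded self-adjoint functional on $A$ is a trace precisely when it annihilates $A_0$, so the space $T_{\mathrm{sa}}(A)$ of bounded self-adjoint traces is exactly the annihilator of $A_0$ inside $(A_{\mathrm{sa}})^*$. The content of \cite[Proposition 2.7]{CP79} is that this identifies $T_{\mathrm{sa}}(A)$ isometrically with the Banach-space dual $(A_{\mathrm{sa}}/A_0)^*$, the duality being the evaluation pairing $\langle a + A_0, \tau\rangle = \tau(a)$. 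Under this identification $T(A)$ sits inside $T_{\mathrm{sa}}(A)$ and its weak$^*$-topology agrees with the topology in which $f$ is assumed continuous.

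First I would extend $f$ to a bounded real-linear functional $F$ on all of $T_{\mathrm{sa}}(A)$. Every self-adjoint trace has an orthogonal Jordan decomposition $t = t_+ - t_-$ into positive traces with $\|t\| = \|t_+\| + \|t_-\|$, and each nonzero $t_\pm$ is a positive multiple of a tracial state; so I can set $F(t) := \|t_+\|\, f(t_+/\|t_+\|) - \|t_-\|\, f(t_-/\|t_-\|)$, with the obvious conventions when a summand vanishes. Affinity of $f$ makes $F$ well defined and linear, and boundedness follows from $|F(t)| \le \|f\|_\infty(\|t_+\| + \|t_-\|) = \|f\|_\infty\|t\|$. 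Here I use that $f$ is bounded, which holds when $T(A)$ is weak$^*$-compact (in particular for unital $A$); this is the only place compactness of $T(A)$ enters.

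The main obstacle is to upgrade norm-boundedness of $F$ to weak$^*$-continuity, since only then does the predual fact apply. By the Krein--\v{S}mulian theorem it suffices to check that $F$ is weak$^*$-continuous on the unit ball $B_1$ of $T_{\mathrm{sa}}(A)$, which is weak$^*$-compact and, via the Jordan decomposition above, equals the image of the map $h(\lambda,\mu,\sigma,\rho) = \lambda\sigma - \mu\rho$ defined on the compact set $\{(\lambda,\mu) : \lambda,\mu \ge 0,\ \lambda + \mu \le 1\} \times T(A) \times T(A)$. This $h$ is weak$^*$-continuous and surjective onto $B_1$, and $F \circ h(\lambda,\mu,\sigma,\rho) = \lambda f(\sigma) - \mu f(\rho)$ is continuous; since $h$ is a continuous surjection of compact Hausdorff spaces, hence a quotient map, continuity of $F \circ h$ forces $F|_{B_1}$ to be continuous. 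Thus $F$ is weak$^*$-continuous, so by the correspondence between weak$^*$-continuous functionals on a dual Banach space and elements of its predual, $F = a + A_0$ for some $a \in A_{\mathrm{sa}}$. Evaluating at $\tau \in T(A)$ gives $\tau(a) = F(\tau) = f(\tau)$, which is the desired self-adjoint element.
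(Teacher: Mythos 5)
Your proof is correct and follows exactly the route the paper intends: the paper's entire ``proof'' of Proposition \ref{prop:CP} is the sentence preceding it, citing \cite[Proposition 2.7]{CP79} together with the fact that weak$^*$-continuous functionals on a dual Banach space come from the predual, and your write-up supplies precisely the intermediate work that citation suppresses --- the Jordan decomposition of self-adjoint traces into positive traces, the extension of $f$ to a bounded linear functional $F$ on $T_{\mathrm{sa}}(A) \cong (A_{\mathrm{sa}}/A_0)^*$, and the Krein--\v{S}mulian argument upgrading boundedness to weak$^*$-continuity. Your compactness caveat is genuinely needed (note that it actually enters twice: for boundedness of $f$ and again for compactness of the domain of $h$): without some such hypothesis the proposition as printed is false --- for $A=C_0(\mathbb{R})$ the constant function $f\equiv 1$ is continuous and affine on $T(A)$ but is represented by no self-adjoint element of $A$ --- yet in every application in the paper (to the type II$_1$ von Neumann algebra $\mathcal M$ and to the unital algebra $B^\infty$) the trace space is compact, so your proof covers every case that is used.
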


In this paper, we will be using CPoU as a property of the uniform tracial sequence algebra $B^\infty$. In \cite[Definition 3.1]{CETWW}, CPoU was originally defined as a property of the uniform tracial ultrapower $B^\omega$, but standard methods allow it to be rephrased as a local property of $B$ instead: see \cite[Proposition 3.2]{CETWW}. 
The same methods allow it to be rephrased as a property of $B^\infty$, analogous to the original definition for $B^\omega$, as recorded in the lemma below. For the purposes of this paper, one can take this as the definition of CPoU.

\begin{lemma}
\label{lem:CPoU-Infinity}
Let $B$ be a separable $C^*$-algebra with $T(B)$ non-empty and compact.
If $B$ has CPoU, then for any $\|\cdot\|_{2,T_\infty(B)}$-separable subset $S$ of $B^\infty$, any $\delta>0$, and any $a_1,\dots,a_k \in (B^\infty)_+$ satisfying
\begin{equation} \min\{\tau(a_1),\dots,\tau(a_k)\} < \delta, \quad \tau \in T_\infty(B), \end{equation}
there exist orthogonal projections $e_1,\dots,e_k \in B^\infty \cap S'$ which sum to $1_{B^\infty}$, such that
\begin{equation} \tau(a_ie_i) \leq \delta\tau(e_i), \quad i=1,\dots,k,\ \tau \in T_\infty(B). \end{equation}
\end{lemma}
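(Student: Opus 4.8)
The plan is to treat this as a translation result: CPoU is available as a local property of $B$ in \cite[Proposition 3.2]{CETWW}, and I want to transport it to the sequence algebra $B^\infty$, obtaining a statement of exactly the same shape as the original $B^\omega$-definition in \cite[Definition 3.1]{CETWW}. The natural vehicle for passing from ``approximate and local in $B$'' to ``exact in a sequence algebra'' is Kirchberg's $\epsilon$-test (Lemma \ref{lem:EpsTest}). So the argument will run parallel to the proof of \cite[Proposition 3.2]{CETWW}, but adapted to $B^\infty$ rather than $B^\omega$: use the local form of CPoU at each finite stage to build approximate complemented partitions of unity, then feed their defects into the $\epsilon$-test to produce a single exact partition of unity in $B^\infty$.

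First I would fix the data. Using $\|\cdot\|_{2,T_\infty(B)}$-separability of $S$, choose a countable $\|\cdot\|_{2,T_\infty(B)}$-dense subset $\{s^{(1)},s^{(2)},\dots\}\subseteq S$ with representative sequences $(s^{(j)}_n)_n$, representatives $(a_{i,n})_n$ with $a_{i,n}\in B_+$, and an approximate unit $(u_n)_n$ for $B$ representing $1_{B^\infty}$. A bounded element acts contractively on the uniform $2$-norm, so any $e_i\in B^\infty$ commuting with every $s^{(j)}$ in $\|\cdot\|_{2,T_\infty(B)}$ automatically commutes with all of $S$; this is why controlling the countably many commutators $[e_i,s^{(j)}]$ suffices to land $e_i$ in the relative commutant $B^\infty\cap S'$.

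The key preliminary step, and the one I expect to be the main subtlety, is converting the hypothesis on limit traces into a uniform statement at finite stages, namely
\[
  \limsup_n\ \sup_{\sigma\in T(B)}\ \min_i \sigma(a_{i,n}) \;<\; \delta .
\]
Since $a_{i,n}\in B$, the function $\sigma\mapsto\min_i\sigma(a_{i,n})$ is weak$^*$-continuous, so compactness of $T(B)$ lets me attain the inner supremum at some $\sigma_n$. Passing to a subsequence realising the $\limsup$ and to a free ultrafilter $\omega$ adapted to it, the sequence $(\sigma_n)$ determines a limit trace $\tau\in T_\infty(B)$, and the inequality $\min_i\sigma_n(a_{i,n})\le\sigma_n(a_{j,n})$ gives, after taking $\lim_{n\to\omega}$, that $\min_j\tau(a_j)\ge\limsup_n\sup_\sigma\min_i\sigma(a_{i,n})$. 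The hypothesis $\min_j\tau(a_j)<\delta$ then yields the displayed strict inequality. It is precisely this interplay between the finite-stage traces and the limit traces (which range over all sequences in $T(B)$ and all free ultrafilters) where compactness of $T(B)$ is essential, as it is what lets a single honest limit trace dominate the $\limsup$ and thereby preserve strictness below $\delta$.

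Granting this, fix $\delta'$ with $\limsup_n\sup_\sigma\min_i\sigma(a_{i,n})<\delta'<\delta$. For all large $n$ one has $\min_i\sigma(a_{i,n})<\delta'$ for every $\sigma\in T(B)$, so the local reformulation of CPoU in \cite[Proposition 3.2]{CETWW}, applied with tolerance $\epsilon$ to the finite set $s^{(1)}_n,\dots,s^{(J)}_n$ and to $a_{1,n},\dots,a_{k,n}$, supplies positive contractions $e_{1,n},\dots,e_{k,n}\in B$ that are approximately orthogonal projections, approximately commute with the $s^{(j)}_n$, approximately sum to $u_n$, and satisfy $\sigma(a_{i,n}e_{i,n})\le\delta'\sigma(e_{i,n})+\epsilon$ for all $\sigma\in T(B)$. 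Taking $X_n=(B_+)^k$ and letting the test functions $f^{(k)}_n$ record the operator-norm defects in self-adjointness, idempotency, orthogonality, and the relation $\sum_i e_{i,n}=u_n$, the commutators $\|[e_{i,n},s^{(j)}_n]\|$, and the trace-violation $\sup_{\sigma\in T(B)}\max\{0,\sigma(a_{i,n}e_{i,n})-\delta\sigma(e_{i,n})\}$, these elements witness that every prescribed finite family of the $f^{(k)}$ is bounded by a fixed multiple of $\epsilon$; here $\delta'<\delta$ and $\sigma(e_{i,n})\ge0$ keep the trace-violation terms at most $\epsilon$. Kirchberg's $\epsilon$-test then produces one sequence $(e_{1,n},\dots,e_{k,n})_n$ on which all $f^{(k)}$ vanish. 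Reading the relations back into $B^\infty$ — using that $\limsup_n\|\cdot\|=0$ forces the representative into $c_0(B)\subseteq\ker(\ell^\infty(B)\to B^\infty)$ and hence gives exact identities, and that vanishing of the trace-violation functions yields $\tau(a_ie_i)\le\delta\tau(e_i)$ for every $\tau\in T_\infty(B)$ — delivers orthogonal projections $e_1,\dots,e_k\in B^\infty\cap S'$ summing to $1_{B^\infty}$, as required.
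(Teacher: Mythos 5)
Your overall strategy is the intended one: the paper offers no proof of this lemma, saying only that ``the same methods'' as in \cite[Proposition 3.2]{CETWW} apply, and those methods are what you describe --- reduce to a finite-stage statement about $B$ and run Kirchberg's $\e$-test. Moreover, the step you rightly single out as the main subtlety is handled correctly: choosing $\sigma_n\in T(B)$ attaining $\sup_{\sigma\in T(B)}\min_i\sigma(a_{i,n})$ and forming the limit trace along a free ultrafilter containing a subsequence realising the $\limsup$ does give $\limsup_n\sup_{\sigma\in T(B)}\min_i\sigma(a_{i,n})<\delta$. The gap is in your $\e$-test setup. You assert that the local form of CPoU supplies positive contractions that are ``approximately orthogonal projections'' and ``approximately sum to $u_n$'', and accordingly your test functions record \emph{operator-norm} defects of idempotency and of the relation $\sum_ie_{i,n}=u_n$. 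This misreads \cite[Proposition 3.2]{CETWW}: the local characterisation provides pairwise orthogonal positive contractions with small commutators against the finite set and only \emph{tracial} control, namely $\sigma(a_{i,n}e_{i,n})\le\delta'\sigma(e_{i,n})+\epsilon$ and $\sigma(e_{1,n}+\cdots+e_{k,n})\ge 1-\epsilon$ for all $\sigma\in T(B)$; no norm-approximate idempotency is on offer. Nor could any correct local characterisation offer it: take $B=C([0,1])\otimes\mathcal Z$, which is separable, nuclear, $\mathcal Z$-stable with $T(B)$ compact, hence has CPoU by Theorem \ref{thm:CPoU}, yet has no projections other than $0$ and $1_B$. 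With $k=2$, $a_1=f\otimes 1_{\mathcal Z}$, $a_2=(1-f)\otimes 1_{\mathcal Z}$ for $f(t)=t$, and $\delta=3/5$, the hypothesis holds (the two functions sum to $1$, so every trace gives $\min\le 1/2$), but any norm-approximate projection in $B$ is norm-close to $0$ or $1_B$; orthogonality forbids two copies of $1_B$, the partition condition forbids two copies of $0$, and evaluation at $t=1$ (resp.\ $t=0$) kills the option $e_1\approx 1_B$ (resp.\ $e_2\approx 1_B$), since then $\tau(a_ie_i)\approx 1>\delta\tau(e_i)+\epsilon$. So for your operator-norm test functions the hypothesis of Lemma \ref{lem:EpsTest} is not merely unverifiable from \cite{CETWW} --- it is false, and the argument stops there.

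The repair is small and restores exactly the standard argument. Measure the partition defect tracially, using the test function $\sup_{\sigma\in T(B)}\|u_n-\sum_ie_{i,n}\|_{2,\sigma}$; this the local data does control, since $0\le 1-\sum_ie_{i,n}\le 1$ in the unitisation gives $\sigma\big((1-\sum_ie_{i,n})^2\big)\le\sigma\big(1-\sum_ie_{i,n}\big)\le\epsilon$ for all $\sigma$, while $\sup_{\sigma\in T(B)}\|1-u_n\|_{2,\sigma}\to 0$ by Dini's theorem. Keep exact pairwise orthogonality (which the local statement provides), your commutator functions, and your trace-violation functions, and drop idempotency from the test entirely. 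The $\e$-test then produces pairwise orthogonal positive contractions $e_1,\dots,e_k\in B^\infty\cap S'$ with $\sum_ie_i=1_{B^\infty}$ and $\tau(a_ie_i)\le\delta\tau(e_i)$ for all $\tau\in T_\infty(B)$, and these are automatically projections: orthogonality gives $e_i=e_i\big(\sum_je_j\big)=e_i^2$. With that modification your proof is a correct translation of the proof of \cite[Proposition 3.2]{CETWW} to $B^\infty$.
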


We will access CPoU through one of the main technical results of \cite{CETWW}, which we recall below. Note that for unital $C^*$-algebras the tracial state space $T(B)$ is automatically compact.

\begin{theorem}
\label{thm:CPoU}
\cite[Theorem I]{CETWW}
Let $B$ be a separable, nuclear, $\mathcal Z$-stable $C^*$-algebra with $T(B)$ compact and non-empty.
Then $B$ has CPoU.
\end{theorem}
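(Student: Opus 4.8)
The plan is to produce the partition of unity $e_1,\dots,e_k$ directly in the uniform tracial sequence algebra, in the form demanded by Lemma~\ref{lem:CPoU-Infinity}, by gluing together the trivial single-trace solution (in a factor, simply take $e_{i_0}=1$ for an index with $\tau(a_{i_0})<\delta$) across the whole of $T(B)$. First I would reduce to a concrete finite-stage problem in $B$. Lift $a_1,\dots,a_k$ to bounded sequences $(a_{i,n})_n$ in $B_+$, fix a countable $\|\cdot\|_{2,T_\infty(B)}$-dense subset of $S$, and---since enlarging $S$ only strengthens the conclusion---arrange that the $a_{i,n}$ themselves are among the elements the $e_i$ must commute with. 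The desired conclusion then splits into a countable family of approximate requirements at finite stages (near-commutation in $2$-norm with finitely many elements, approximate orthogonality, approximate summation to an approximate unit, and the tracial inequality up to a small error uniformly over $T(B)$), so by Kirchberg's $\epsilon$-test (Lemma~\ref{lem:EpsTest}) it suffices to solve each finite-stage problem approximately.

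At a finite stage the hypothesis $\min_i\tau(a_i)<\delta$ becomes the statement that the continuous affine functions $\tau\mapsto\tau(a_{i,n})$ on the compact space $T(B)$ have pointwise minimum below $\delta$, so the open sets $U_i:=\{\tau:\tau(a_{i,n})<\delta\}$ cover $T(B)$. Using compactness I would choose a continuous partition of unity $h_1,\dots,h_k$ on $T(B)$ with $\mathrm{supp}\,h_i\subseteq U_i$ and $\sum_i h_i\equiv 1$. The remaining task is to realise this tracial partition of unity by genuine orthogonal projections $e_i$ in the central sequence algebra $B^\infty\cap S'$ with $\tau(e_i)\approx h_i(\tau)$ uniformly in $\tau$.

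This realisation step is where both hypotheses on $B$ are essential and is the point I expect to be the main obstacle. Nuclearity forces each GNS von Neumann algebra $\pi_\tau(B)''$ to be hyperfinite, so its fibres carry projections of every trace, while $\mathcal Z$-stability (through a Matui--Sato style argument and property (SI)) is what lets these fibrewise projections be realised by a single approximately central sequence in $B$ \emph{simultaneously over all traces}, with continuously varying tracial size. Building the $e_i$ recursively inside successive orthogonal complements---at each step producing, in the corner cut out by $1-e_1-\dots-e_{i-1}$, an approximately central projection of prescribed trace $h_i$ using the strict comparison afforded by $\mathcal Z$---yields pairwise orthogonal $e_i$ with $\tau(e_i)\approx h_i(\tau)$; since $\sum_i h_i\equiv 1$, the trace of $1-\sum_i e_i$ is approximately zero, forcing $\sum_i e_i\approx 1$. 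The genuine difficulty is the uniformity: controlling approximate centrality and the continuously varying traces at once, rather than one trace at a time.

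Finally I would verify the inequality $\tau(a_ie_i)\le\delta\tau(e_i)$. Because each $e_i$ is constructed to commute with $a_i$ and arises from a central sequence, at an extreme trace $\tau$ the representation $\pi_\tau(B)''$ is a factor and the central sequence is tracially independent of $a_i$, giving $\tau(a_ie_i)=\tau(a_i)\tau(e_i)$; as $\mathrm{supp}\,h_i\subseteq U_i$ forces $\tau(a_i)<\delta$ wherever $\tau(e_i)>0$, this yields $\tau(a_ie_i)\le\delta\tau(e_i)$ at extreme traces. Both sides being weak$^*$-continuous and affine in $\tau$, the inequality extends to all of $T(B)$ by the barycentre decomposition. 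Feeding these approximate finite-stage solutions back into Lemma~\ref{lem:EpsTest} produces exact orthogonal projections summing to $1$ with $\tau(a_ie_i)\le\delta\tau(e_i)$ for all limit traces, establishing CPoU.
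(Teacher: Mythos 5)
The first thing to note is that this paper does not prove this statement at all: Theorem~\ref{thm:CPoU} is imported verbatim from \cite[Theorem I]{CETWW}, where it is the main technical result and its proof occupies a substantial part of that paper. So your proposal is competing not with an internal argument of this paper but with the full proof in \cite{CETWW}, and it falls well short of one.

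Your outline reproduces the easy outer shell of such a proof (reduce to finite stages via Kirchberg's $\e$-test, cover $T(B)$ by the open sets $U_i=\{\tau : \tau(a_i)<\delta\}$, take a subordinate partition of unity $h_1,\dots,h_k$, glue), but the entire content of the theorem is concentrated in the step you explicitly defer: realising $h_1,\dots,h_k$ by pairwise orthogonal, approximately central projections $e_i$ with $\tau(e_i)\approx h_i(\tau)$ \emph{uniformly over all of} $T(B)$. Naming ``a Matui--Sato style argument and property (SI)'' does not fill this in: the pre-existing (SI)/Matui--Sato technology, including its two-coloured refinement in \cite{BBSTWW}, produces such simultaneous realisations only under restrictive hypotheses on the trace space (e.g.\ $\partial_e T(B)$ finite, or compact and finite-dimensional), and removing those restrictions for arbitrary compact $T(B)$ is precisely the breakthrough of \cite{CETWW}, achieved by a different and much longer route (uniform amenability of traces from nuclearity, a relative property (SI), and a bootstrap). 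In effect, your key step assumes a statement at least as strong as CPoU itself, so the argument is circular. There is also a more local failure: the identity $\tau(a_ie_i)=\tau(a_i)\tau(e_i)$ at extreme traces is valid only in the limit, where $e_i$ exactly commutes with everything in sight and one can invoke factoriality of $\pi_\tau(B)''$ together with uniqueness of its normal trace; at the finite stages where the $\e$-test requires you to verify the estimate, $e_{i,n}$ is merely approximately central, and the error in tracial independence at a given extreme trace is governed by Dixmier-averaging data whose constants depend on the trace and on $a_{i,n}$, with no uniformity over $\partial_e T(B)$. Obtaining that uniformity is, once again, the actual difficulty rather than a routine compactness-plus-barycentre argument.
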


\section{Results}

We start by recording an application of CPoU regarding unitaries in uniform tracial sequence algebras of $\mathcal Z$-stable nuclear $C^*$-algebras for use in \cite{CGSTW}. 

\begin{proposition}\label{unitaryexp}
Let $B$ be a separable  $C^*$-algebra with CPoU and $T(B)$ compact and non-empty. Let $S$ be a $\|\cdot\|_{2,T_\infty(B)}$-separable subset of $B^\infty$ closed under taking adjoints.
Then every unitary $u \in B^\infty \cap S'$ can be written as an exponential $u = e^{\pi i h}$ for some self-adjoint $h \in B^\infty \cap S'$ of norm at most $1$.
In particular this holds whenever $B$ is separable, unital, nuclear and $\mathcal Z$-stable with $T(B)\neq\emptyset$.
\end{proposition}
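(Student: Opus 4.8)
The plan is to prove that every unitary $u \in B^\infty \cap S'$ is an exponential by reducing to a statement that holds approximately in each tracial GNS representation and then upgrading it to an exact statement via CPoU. Recall that in a von Neumann algebra every unitary is an exponential $e^{\pi i h}$ with $h$ self-adjoint of norm at most $1$: this is because the spectrum lives in the unit circle and one can choose a Borel branch of the logarithm taking values in $(-\pi, \pi]$. The obstruction to doing this directly in $B^\infty$ is that the spectrum of $u$ may fill the entire unit circle, so there is no norm-continuous branch of the logarithm available, and one cannot simply apply continuous functional calculus. The role of CPoU will be to cut the circle into finitely many arcs, on each of which a continuous logarithm exists, and glue the resulting local data together.

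First I would reduce to the case where $S$ is a $C^*$-algebra and, replacing $S$ by the $C^*$-algebra it generates together with $u$, arrange that $u$ commutes with everything in sight while keeping $S$ separable in $\|\cdot\|_{2,T_\infty(B)}$. Next I would fix a small parameter and choose finitely many points $z_1, \dots, z_k$ on the unit circle, together with positive functions $g_1, \dots, g_k \in C(\mathbb{T})$ supported in small arcs around the $z_i$ whose supports cover $\mathbb{T}$. Setting $a_i := g_i(u) \in (B^\infty)_+$, the covering condition should be arranged so that the hypothesis $\min_i \tau(a_i) < \delta$ holds for all $\tau \in T_\infty(B)$ — this follows because at any trace the spectral measure of $u$ cannot be concentrated away from all the arcs simultaneously once the arcs cover $\mathbb{T}$. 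Applying Lemma~\ref{lem:CPoU-Infinity} yields orthogonal projections $e_1, \dots, e_k \in B^\infty \cap S'$ summing to $1$, with $e_i$ commuting with $u$ (since $S$ was enlarged to contain $u$), such that $\tau(a_i e_i) \le \delta \tau(e_i)$ for all limit traces $\tau$.

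The key geometric point is then that on the range of $e_i$, the unitary $u$ is, in uniform $2$-norm, close to having spectrum confined to the short arc around $z_i$, away from the branch cut of a continuous logarithm $\log_i$ chosen to omit the antipode of $z_i$. On each $e_i B^\infty e_i$ one defines a local self-adjoint element $h_i := \frac{1}{\pi i}\log_i(u) e_i$, where $\log_i$ is a continuous branch of the logarithm defined on $\mathbb{T}$ minus a point not lying in the arc, scaled so that $\|h_i\| \le 1$. Summing, I would set $h := \sum_{i=1}^k h_i$, which is self-adjoint, of norm at most $1$, and lies in $B^\infty \cap S'$. The estimate $\tau(a_i e_i) \le \delta \tau(e_i)$ guarantees that $e^{\pi i h_i}$ agrees with $u e_i$ up to small uniform $2$-norm on each block, so that $\|e^{\pi i h} - u\|_{2, T_\infty(B)}$ is controlled by $\delta$.

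Finally I would run Kirchberg's $\epsilon$-test (Lemma~\ref{lem:EpsTest}) to pass from these approximate exponentials to an exact one. The functions fed into the $\epsilon$-test would measure, for a candidate representative sequence, both the distance $\|e^{\pi i h} - u\|_{2, T_\infty(B)}$ and the commutation defects with a dense sequence from $S$, together with the constraint $\|h\| \le 1$; producing approximate solutions for each $\epsilon$ is exactly what the CPoU construction above provides, and the test then yields a genuine $h$ with $u = e^{\pi i h}$ exactly. The main obstacle I anticipate is the careful bookkeeping in the gluing step: ensuring that the local logarithms on overlapping arcs, once multiplied by the orthogonal projections $e_i$, combine into a single self-adjoint $h$ of norm at most $1$ rather than merely bounded, and that the small discrepancies between $u e_i$ and the arc-confined spectral part are genuinely controlled in the uniform $2$-norm by $\delta$ rather than only traced against individual $\tau$. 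The final sentence of the statement is then immediate from Theorem~\ref{thm:CPoU}, which supplies CPoU for separable unital nuclear $\mathcal{Z}$-stable $B$ with non-empty trace space.
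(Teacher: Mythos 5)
Your overall architecture---reduce to an approximate statement via Kirchberg's $\epsilon$-test, solve a local problem, and glue the local solutions with CPoU---is exactly the paper's, but your choice of \emph{local} problem (spectral arcs of $u$) rather than the paper's (traces of $B^\infty$) breaks the argument in two places. First, you have inverted the meaning of your own CPoU application: with $a_i = g_i(u)$ and $g_i$ supported in the arc $A_i$ around $z_i$, the conclusion $\tau(a_i e_i)\le\delta\tau(e_i)$ says that on the block $e_i$ the spectral measure of $u$ gives \emph{small} mass to $A_i$, i.e.\ the spectrum tracially \emph{avoids} $A_i$; it does not say the spectrum is ``confined to'' $A_i$. (Likewise, the hypothesis $\min_i\tau(a_i)<\delta$ holds by pigeonhole---if the $g_i$ have bounded overlap then $\sum_i\tau(g_i(u))\le C$, so the minimum is at most $C/k$---not because ``the spectral measure cannot be concentrated away from all the arcs''; that is the opposite, and irrelevant, assertion.) So the branch cut for block $i$ must be placed at $z_i$ itself, not at its antipode. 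Second, even after this correction, tracial smallness is not spectral exclusion: $z_i$ may genuinely lie in the spectrum of $ue_i$, so the discontinuous branch $\log_i$ cannot be applied to $u$ by continuous functional calculus at all; one would have to interpolate $\log_i$ continuously across $A_i$ and control the resulting error against the traces using $\tau(g_i(u)e_i)\le\delta\tau(e_i)$. That repair is routine, but it exposes the fatal problem.

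The fatal problem is the norm bound. A continuous function $f:\mathbb{T}\to[-1,1]$ with $e^{\pi i f(z)}=z$ for all $z$ outside an arc $A$ exists only when $-1\in A$: on the complementary arc, $f$ is a continuous branch of $\tfrac1\pi\arg$, whose range is an interval of length $2-|A|/\pi$, and this interval can be translated by even integers into $[-1,1]$ precisely when the deleted arc contains $-1$. Since your points $z_i$ must be spread around the whole circle for the covering/pigeonhole step to work, on most blocks the local logarithm forces $\|h_i\|$ up to (nearly) $2$; no ``scaling'' is available, because rescaling $h_i$ changes $e^{\pi i h_i}$. Consequently your construction can at best produce $h$ with $\|h\|\le 2$ and $u=e^{\pi i h}$, so it never supplies the approximate solutions with $\|h\|\le 1$ that your $\epsilon$-test requires, and there is no continuous-functional-calculus way to convert an exponent of norm $2$ into one of norm $1$ (that conversion is a Borel, not continuous, operation). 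The paper sidesteps this entirely by localising over traces instead of over spectral regions: for each $\tau\in T(B^\infty)$, Borel functional calculus \emph{inside the finite von Neumann algebra} $\pi_\tau(B^\infty\cap S')''$ handles the branch point at $-1$ and yields a self-adjoint contraction $x$ with $\pi_\tau(u)=e^{\pi i x}$; Kaplansky density and lifting give a self-adjoint contraction $h_\tau\in B^\infty\cap S'$ with $\|u-e^{\pi i h_\tau}\|_{2,\tau}<\epsilon$; compactness of $T(B^\infty)$ selects finitely many $\tau_j$, and CPoU (applied to $a_{\tau_j}=|u-e^{\pi i h_{\tau_j}}|^2$) produces commuting projections $p_j$ so that $h=\sum_j p_jh_{\tau_j}$ is again a contraction with $\|u-e^{\pi i h}\|_{2,T_\infty(B)}\le\epsilon$. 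The norm bound survives every step because the branch point is only ever treated inside von Neumann algebras, where Borel functional calculus is legitimate. If you want to salvage an arc-based argument, you would have to glue spectral projections of $u$ for the arcs, and those again only exist in the tracial GNS closures---which lands you back at the paper's proof.
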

\begin{proof}
The final sentence of the proposition follows from the rest by Theorem \ref{thm:CPoU}.   Fix $\e>0$.
By Kirchberg's $\e$-test (Lemma \ref{lem:EpsTest}),\footnote{Since $S$ is $\|\cdot\|_{2,T_\infty(B)}$-separable, testing that a sequence $(b_n) \subset \ell^\infty(B)$ represents an element of $B^\infty \cap S'$ requires only countably many constraints. Indeed, $B^\infty \cap S' = B^\infty \cap S_0'$ for any countable $\|\cdot\|_{2,T_\infty(B)}$-dense subset $S_0 \subset S$, as multiplication is jointly $\|\cdot\|_{2,T_\infty(B)}$-continuous on $\|\cdot\|$-bounded sets.} it suffices to prove that there exists a self-adjoint $h \in B^\infty \cap S'$ of norm at most $1$ such that
\begin{equation} \|u-e^{\pi i h}\|_{2,T_\infty(B)} \leq \e. \end{equation}
For each $\tau \in T(B^\infty)$, using Borel functional calculus, there exists a self-adjoint $x \in \pi_\tau(B^\infty \cap S')''$ of norm at most $1$ such that $\pi_\tau(u)=e^{\pi i x}$. 
By the Kaplansky density theorem, we may approximate $x$ by a self-adjoint contraction in $\pi_\tau(B^\infty \cap S')$, which can then be lifted to a self-adjoint contraction $h_\tau \in B^\infty \cap S'$ such that
\begin{equation} \|u-e^{\pi i h_\tau}\|_{2,\tau} < \e. \end{equation}
Set $a_\tau:=|u-e^{\pi i h_\tau}|^2 \in (B^\infty)_+$, so that $\tau(a_\tau)<\e^2$.
By continuity and compactness, there exist $\tau_1,\dots,\tau_k \in T(B^\infty)$ such that for every $\tau \in T(B^\infty)$,
\begin{equation} \min\{\tau(a_{\tau_1}),\dots,\tau(a_{\tau_k})\} < \e^2. \end{equation}
Using CPoU as in Lemma \ref{lem:CPoU-Infinity}, there exists a partition of unity consisting of projections $p_1,\dots,p_k \in B^\infty \cap \{u,h_{\tau_1},\dots,h_{\tau_k}\}' \cap S'$ such that
\begin{equation} \tau(p_ja_{\tau_j}) \leq \e^2\tau(p_j), \quad \tau \in T_\infty(B),\ j=1,\dots,k. \end{equation}
Set $h:=\sum_{j=1}^k p_jh_{\tau_j} \in B^\infty \cap S'$.
Since the $p_i$ are orthogonal and commute with the self-adjoint contractions $h_{\tau_j}$, this is a self-adjoint contraction.
We note that for $j=1,\dots,k$,
\begin{equation} \label{eq:unitaries}
p_je^{\pi i h} = p_je^{\pi i h_{\tau_j}}. 
\end{equation}
Using this, for $\tau \in T_\infty(B)$, we compute:
\begin{align}
\tau(|u-e^{\pi i h}|^2)
&= \sum_{j=1}^k \tau(p_j|u-e^{\pi i h}|^2) \\
\notag
& = \sum_{j=1}^k \tau(p_j|u-e^{\pi i h_{\tau_j}}|^2) \\
\notag
&= \sum_{j=1}^k \tau(p_ja_j) \\
\notag 
&\leq  \sum_{j=1}^k \e^2\tau(p_j) = \e^2. 
\qedhere
\end{align}
\end{proof}

We next turn to the uniqueness aspect of Theorem \ref{thm:MainThm}. Recall the by-now well-known consequence of Connes' characterisation of hyperfiniteness from \cite{Co76}, that if $A$ is separable and nuclear and $\mathcal M$ is a finite von Neumann algebra, then $^*$-homomorphisms $\phi,\psi:A\to \mathcal M$ are strong$^*$-approximately unitary equivalent if and only if $\tau\circ\phi=\tau\circ\psi$ for all $\tau\in T(\mathcal M)$ (see \cite[Proposition 2.1]{CGNN13}, for example\footnote{In the statement of \cite[Proposition 2.1]{CGNN13}, the codomain $\mathcal M$ is required to be countably decomposable; however, this hypothesis is not needed in the proof. It might also be noted that in our application, in the proof of Theorem \ref{thm:Uniqueness}, the codomain $\pi_\tau(B^\infty)''$ is countably decomposable as it has a faithful trace.}).

\begin{theorem}
\label{thm:Uniqueness}
Let $A$ be a separable nuclear $C^*$-algebra and let $B$ be a separable $C^*$-algebra with CPoU and with $T(B)$ compact and non-empty. If $\phi,\psi:A \to B^\infty$ are $^*$-homomorphisms such that $\tau\circ \phi = \tau\circ \psi$ for all $\tau \in T(B^\infty)$ then $\phi$ and $\psi$ are unitarily equivalent. 
\end{theorem}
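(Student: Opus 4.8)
The plan is to prove that $\phi$ and $\psi$ are unitarily equivalent by combining the von Neumann algebra classification (Connes' theorem) in each tracial representation with CPoU to globalise. I would set up a Kirchberg $\e$-test argument: fixing a finite subset $F \subset A$ and $\e > 0$, it suffices to produce a unitary $u \in B^\infty$ with $\|u\phi(a)u^* - \psi(a)\|_{2,T_\infty(B)} \leq \e$ for all $a \in F$, since the $\e$-test (Lemma \ref{lem:EpsTest}) will then upgrade these approximate intertwiners into a genuine unitary implementing $u\phi(\cdot)u^* = \psi(\cdot)$ exactly in $B^\infty$. (Some care is needed because unitarity is not directly a vanishing-$2$-norm condition, but this is handled by noting the relevant constraints --- that $u$ is a unitary and approximately intertwines --- can each be phrased via $2$-norms, as in the proof of Proposition \ref{unitaryexp}.)

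For each $\tau \in T(B^\infty)$, I would pass to the GNS representation $\pi_\tau$ and consider the two $^*$-homomorphisms $\pi_\tau \circ \phi, \pi_\tau \circ \psi : A \to \pi_\tau(B^\infty)''$ into the finite von Neumann algebra $\mathcal{M}_\tau := \pi_\tau(B^\infty)''$. The hypothesis $\tau \circ \phi = \tau \circ \psi$ means these two maps induce the same trace on $A$ (after composing with the canonical trace on $\mathcal{M}_\tau$, and more generally on $\tau' \circ \phi = \tau' \circ \psi$ for all traces $\tau'$ of $\mathcal{M}_\tau$, which follows since limit traces separate). By Connes' theorem (as recalled after the statement of Proposition \ref{unitaryexp}), $\pi_\tau \circ \phi$ and $\pi_\tau \circ \psi$ are strong$^*$-approximately unitarily equivalent; hence there is a unitary $v \in \mathcal{M}_\tau$ with $\|v \pi_\tau(\phi(a)) v^* - \pi_\tau(\psi(a))\|_{2,\tau}$ small for each $a \in F$. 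Using Kaplansky density I would approximate $v$ by a unitary in $\pi_\tau(B^\infty)$ and lift it to a unitary $u_\tau \in B^\infty$ (arranging $u_\tau$ to commute with a suitable separable set if needed), so that $\tau(a_\tau) < \e^2$ where $a_\tau := \sum_{a \in F} |u_\tau \phi(a) u_\tau^* - \psi(a)|^2 \in (B^\infty)_+$.

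With a finite family $a_{\tau_1}, \dots, a_{\tau_k}$ obtained by a continuity-and-compactness argument on $T(B^\infty)$ so that $\min_j \tau(a_{\tau_j}) < \e^2$ for every $\tau \in T_\infty(B)$, I would apply CPoU (Lemma \ref{lem:CPoU-Infinity}) to get orthogonal projections $p_1, \dots, p_k \in B^\infty$ summing to $1$, commuting with the relevant elements (the images of $\phi, \psi$ on $F$ and the lifted unitaries $u_{\tau_j}$), and satisfying $\tau(p_j a_{\tau_j}) \leq \e^2 \tau(p_j)$. Then $u := \sum_{j=1}^k p_j u_{\tau_j}$ is a unitary (orthogonal projections times commuting unitaries), and exactly as in the computation at the end of Proposition \ref{unitaryexp}, one gets $\tau(|u\phi(a)u^* - \psi(a)|^2) \leq \e^2$ for each $a \in F$ and all $\tau \in T_\infty(B)$, so $\|u\phi(a)u^* - \psi(a)\|_{2,T_\infty(B)} \leq \e$, completing the inductive input to the $\e$-test.

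The main obstacle I anticipate is the careful bookkeeping required to make $u := \sum_j p_j u_{\tau_j}$ genuinely unitary while simultaneously controlling commutation relations: the projections $p_j$ must be chosen (via the separable-set version of CPoU in Lemma \ref{lem:CPoU-Infinity}) to commute with all the $u_{\tau_j}$ and with $\phi(F) \cup \psi(F)$, and one needs $u^* u = uu^* = 1$, which uses orthogonality of the $p_j$ and $\sum_j p_j = 1$ together with the $u_{\tau_j}$ being unitaries commuting with the $p_j$. A secondary subtlety is that Connes' theorem gives strong$^*$-approximate unitary equivalence rather than an exact intertwiner, which is precisely why the $\e$-test framework is needed; and one must confirm that the approximate-intertwining and unitarity constraints fed into the $\e$-test are all expressible through $\limsup$s of $2$-norm functionals on the sequence spaces $X_n$, so that the hypotheses of Lemma \ref{lem:EpsTest} are genuinely met.
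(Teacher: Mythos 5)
Your proposal is correct and takes essentially the same approach as the paper's proof: a Kirchberg $\epsilon$-test reduction, Connes' theorem (via \cite[Proposition 2.1]{CGNN13}) applied to $\pi_\tau\circ\phi$ and $\pi_\tau\circ\psi$ in each GNS representation with Kaplansky density supplying liftable unitaries, continuity and compactness of the trace space to extract $\tau_1,\dots,\tau_k$, and CPoU to assemble the glued unitary $u=\sum_j p_j u_{\tau_j}$ commuting with the relevant finite sets, followed by the same final $2$-norm computation. One minor remark: the agreement of $\pi_\tau\circ\phi$ and $\pi_\tau\circ\psi$ on all traces of $\pi_\tau(B^\infty)''$ needs no separation property of limit traces --- it is immediate because every trace on $\pi_\tau(B^\infty)''$ pulls back along $\pi_\tau$ to an element of $T(B^\infty)$, where the hypothesis applies directly.
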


\begin{proof}
Fix $\e>0$ and a finite set $\mathcal F\subset A$.
Since $A$ is separable, by using Kirchberg's $\e$-test (Lemma \ref{lem:EpsTest}), it suffices to prove that there is a unitary $u \in B^\infty$ such that 
\begin{equation}
\label{eq:Uniqueness0}
\|\phi(x)-u^*\psi(x)u\|_{2,T_\infty(B)} \leq \e, \quad x \in \mathcal F. \end{equation}
Set
\begin{equation} \eta := \frac{\e}{\sqrt{|\mathcal F|}}. \end{equation}
Fix, for the moment, a trace $\tau \in T(B^\infty)$, and recall that $\pi_\tau:B^\infty \to \pi_\tau(B^\infty)''$ is the corresponding GNS representation.
Then since $A$ is nuclear and $\pi_\tau\circ \phi, \pi_\tau\circ \psi:A \to \pi_\tau(B^\infty)''$ agree on the traces of $\pi_\tau(B^\infty)''$, it follows that these maps are strong$^*$-approximately unitarily equivalent. By Kaplansky's density theorem, the unitaries implementing this can be taken from $\pi_\tau(B^\infty)$ (as done in the proof of Proposition \ref{unitaryexp}). Since the strong$^*$-topology is given by $\|\cdot\|_{2,\tau}$ on bounded sets, it follows that there exists a unitary $u_\tau \in B^\infty$ such that
\begin{equation} 
\|\phi(x)-u_\tau^*\psi(x)u_\tau\|_{2,\tau} < \eta, \quad x \in \mathcal F.
\end{equation}
Set
\begin{equation}
\label{eq:Uniqueness1}
a_\tau:= \sum_{x \in \mathcal F} |\phi(x)-u_\tau^*\psi(x)u_\tau|^2 \in (B^\infty)_+, \end{equation}
so that $\tau(a_\tau) < |\mathcal F|\eta^2 = \e^2$.

By continuity and compactness, there exist $\tau_1,\dots,\tau_k \in T(B^\infty)$ such that for every $\tau \in T(B^\infty)$,
\begin{equation} \min\{\tau(a_{\tau_1}),\dots,\tau(a_{\tau_k})\} < \e^2. \end{equation}
Using CPoU as in Lemma \ref{lem:CPoU-Infinity}, there exist orthogonal projections $e_1,\dots,e_k \in B^\infty \cap (\psi(\mathcal F)\cup\phi(\mathcal F) \cup \{u_{\tau_1},\dots,u_{\tau_k}\})'$ which sum to $1_{B^\infty}$ such that
\begin{equation}
\label{eq:Uniqueness2}
 \tau(a_{\tau_i}e_i) \leq \e^2\tau(e_i),\quad \tau \in T_\infty(B). \end{equation}
Set
\begin{equation} u:=\sum_{i=1}^k e_iu_{\tau_i}. \end{equation}
Since the $e_i$ are orthogonal projections summing to $1_{B^\infty}$ and using the fact that they commute with the unitaries $u_{\tau_j}$, it follows that $u$ is itself a unitary.
Moreover, for $x \in \mathcal F$ and $\tau \in T_\infty(B)$, using the fact that the $e_i$ are orthogonal projections which commute with the $u_{\tau_i}$, and both $\phi(x)$ and $\psi(x)$, we have
\begin{equation}
\label{eq:Uniqueness3} |\phi(x)-u^*\psi(x)u|= \sum_{i=1}^k e_i|\phi(x)-u_{\tau_i}^*\psi(x)u_{\tau_i}|. \end{equation}
Hence
\begin{eqnarray}
\notag
\|\phi(x)-u^*\psi(x)u\|_{2,\tau}^2
&\stackrel{\eqref{eq:Uniqueness3}}=& \sum_{i=1}^k \tau\big(e_i|\phi(x) - u_{\tau_i}^*\psi(x)u_{\tau_i}|^2\big) \\
\notag
&\stackrel{\eqref{eq:Uniqueness1}}\leq& \sum_{i=1}^k \tau(e_ia_{\tau_i}) \\
&\stackrel{\eqref{eq:Uniqueness2}}\leq& \sum_{i=1}^k \e^2\tau(e_i) = \e^2.
\end{eqnarray}
Taking the supremum over all $\tau \in T_\infty(B)$, \eqref{eq:Uniqueness0} follows.
\end{proof}

In order to get our existence result into $B^\infty$, we begin with two existence results into von Neumann algebras; these rely on the quasidiagonality of amenable traces on cones established in \cite{BCW16}, which in turn builds on the earlier results of \cite{SWW15,Ga16}.  Recall that a trace $\tau$ on $A$ is \emph{amenable} if given a finite subset $\mathcal F\subset A$ and $\epsilon>0$ there is a c.p.c.\ map $\phi:A\to M_n$ for some $n$ ($\phi$ can be taken to be unital when $A$ is unital) such that \begin{equation}\label{neweq}
\|\phi(ab)-\phi(a)\phi(b)\|_{2,\tr_{M_n}}<\epsilon,\quad a,b\in \mathcal F,
\end{equation}
and
\begin{equation}
|\tr_{M_n}(\phi(a))-\tau(a)|<\epsilon,\quad a\in \mathcal F.
\end{equation}
We write $T_{\mathrm{am}}(A)$ for the set of amenable traces on $A$.
The trace $\tau$ is said to be \emph{quasidiagonal} if (\ref{neweq}) can be strengthened to the operator norm estimate $\|\phi(ab)-\phi(a)\phi(b)\|<\epsilon$ for $a\in\mathcal F$.  We write $T_{\mathrm{qd}}(A)$ for the set of quasidiagonal traces on $A$.  See \cite{Bro06} for details on these approximation properties.

\begin{lemma}\label{l:existence1}
	Let $A$ be a $C^*$-algebra, let $\mathcal M$ be a type II$_1$ von Neumann algebra, and let $\lambda \in T_{\mathrm{am}}(A)$. 
	Then given a finite set $\mathcal{F} \subset A$ and $\epsilon>0$, there exist a finite dimensional $C^*$-algebra $F$, a c.p.c.\ map $\theta:A \to F$, and a unital $^*$-homomorphism $\eta:F \to \mathcal M$ such that:
	\begin{align}
	\|\theta(a)\theta(b)\| &< \epsilon \quad && \text{for }a,b \in \mathcal F \text{ satisfying }ab=0\text{, and} \label{l:existence1:eq1} \\
	|\tau (\eta \circ \theta(a))-\lambda(a)|&<\epsilon &&\text{for }a \in \mathcal F\text{ and }\tau \in T(\mathcal M). \label{l:existence1:eq2}
	\end{align}
\end{lemma}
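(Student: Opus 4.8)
The plan is to reduce the operator-norm orthogonality requirement \eqref{l:existence1:eq1} to operator-norm approximate multiplicativity on a cone, where it follows from quasidiagonality, and to arrange the trace condition \eqref{l:existence1:eq2} by factoring through a trace-compatible copy of a matrix algebra inside $\mathcal M$.

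First I would pass to the cone $CA := C_0((0,1]) \otimes A$, writing $\iota \in C_0((0,1])$ for the generator $\iota(t)=t$ and $\mathrm{ev}_1 : CA \to A$ for evaluation at the endpoint $1$. Then $\gamma := \lambda \circ \mathrm{ev}_1$ is a trace on $CA$, and it is amenable: composing the c.p.c.\ maps witnessing amenability of $\lambda$ with the $^*$-homomorphism $\mathrm{ev}_1$ witnesses amenability of $\gamma$. Crucially $\gamma(\iota \otimes a) = \lambda(\mathrm{ev}_1(\iota \otimes a)) = \lambda(a)$ for every $a \in A$. Invoking the quasidiagonality of amenable traces on cones (\cite{BCW16}), applied to the finite set $\{\iota \otimes a : a \in \mathcal F\}$ and a tolerance $\delta \leq \epsilon$, I obtain $n \in \mathbb N$ and a c.p.c.\ map $\Psi : CA \to M_n$ which is operator-norm approximately multiplicative on this set and satisfies $|\tr_{M_n}(\Psi(\iota \otimes a)) - \lambda(a)| < \delta$ for $a \in \mathcal F$.

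I then set $F := M_n$ and $\theta(a) := \Psi(\iota \otimes a)$. The map $a \mapsto \iota \otimes a$ is c.p.c.\ (as $\iota \geq 0$ and $\|\iota\| = 1$), so $\theta$ is c.p.c. The key point is that $ab = 0$ forces $(\iota \otimes a)(\iota \otimes b) = \iota^2 \otimes ab = 0$ in $CA$, so approximate multiplicativity of $\Psi$ gives $\|\theta(a)\theta(b)\| = \|\Psi(\iota \otimes a)\Psi(\iota \otimes b)\| < \delta \leq \epsilon$, establishing \eqref{l:existence1:eq1}. This is precisely the mechanism by which the cone (via the order-zero correspondence) converts genuine orthogonality in $A$ into operator-norm orthogonality of the images, and it explains why only operator-norm approximate multiplicativity of $\Psi$, rather than a genuine homomorphism, is required.

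It remains to build $\eta$. Since $\mathcal M$ is of type II$_1$, its identity decomposes as a sum of $n$ mutually orthogonal, Murray--von Neumann equivalent projections; the associated system of matrix units yields a unital $^*$-homomorphism $\eta : M_n \to \mathcal M$. Because equivalent projections have equal trace under any tracial state, $\tau \circ \eta = \tr_{M_n}$ for every $\tau \in T(\mathcal M)$. Hence $\tau(\eta \circ \theta(a)) = \tr_{M_n}(\Psi(\iota \otimes a))$ lies within $\delta \leq \epsilon$ of $\lambda(a)$, giving \eqref{l:existence1:eq2}. The main obstacle is entirely front-loaded into the cited quasidiagonality of amenable traces on cones; once that input is in hand, the cone trick and the trace-compatible matrix embedding into a type II$_1$ von Neumann algebra are routine.
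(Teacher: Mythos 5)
Your proposal is correct and follows essentially the same route as the paper: pass to the cone $C_0((0,1])\otimes A$, use quasidiagonality of the induced trace there (from \cite{BCW16}) to obtain the c.p.c.\ map $\theta$ whose images of orthogonal elements are approximately orthogonal in operator norm, and then embed the resulting matrix algebra unitally into the type II$_1$ algebra $\mathcal M$, with uniqueness of the trace on $M_n$ giving the tracial estimate. The only cosmetic difference is that you verify amenability of $\lambda\circ\mathrm{ev}_1$ as an intermediate step before invoking quasidiagonality of amenable traces on cones, whereas the paper cites \cite[Proposition 3.2]{BCW16} directly for the quasidiagonality of $\delta_1\otimes\lambda$.
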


\begin{proof}
	Set $\mathcal G : = \{ \mathrm{id}_{(0,1]} \otimes a \in C_0((0,1]) \otimes A : a \in \mathcal{F} \}$.
	By \cite[Proposition 3.2]{BCW16}, the trace $\delta_1 \otimes \lambda$ is quasidiagonal on $C_0((0,1]) \otimes A$, where $\delta_1$ is the functional of evaluation at $1$ on $C_0((0,1]$.  Thus, there exist a matrix algebra $F$ and a c.p.c.\ map $\phi: C_0((0,1]) \otimes A \to F$ such that 
	\begin{align}
\notag
	\| \phi(x) \phi(y) - \phi(xy) \| & < \varepsilon,\quad x,y \in \mathcal G,  \\ 
	|\tr_F \circ \phi ( x ) - (\delta_1 \otimes \lambda)(x)| &< \varepsilon, \quad x \in \mathcal G. 
	\end{align}
	Define $\theta: A \to F$ by $\theta(a):=\phi(\mathrm{id}_{(0,1]} \otimes a)$, so that it immediately follows that \eqref{l:existence1:eq1} is satisfied. As $\mathcal M$ is type II$_1$, $F$ can be embedded unitally in $\mathcal M$. Let $\eta:F \hookrightarrow\mathcal  M$ be any such embedding. By the uniqueness of the trace on $F$, \eqref{l:existence1:eq2} is also satisfied. 
\end{proof}

\begin{lemma}\label{l:existence2}
	Let $A$ be a $C^*$-algebra, let $\mathcal M$ be a type II$_1$ von Neumann algebra 
and let $\alpha:T(\mathcal M) \to T_{\mathrm{am}}(A)$ be affine and continuous.
	Let $\mathcal{F} \subset A_{sa}$ be a finite set.
	Then given $\epsilon>0$, there exist a finite dimensional $C^*$-algebra $F$, a c.p.c.\ map $\theta:A \to F$, and a unital $^*$-homomorphism $\eta:F \to \mathcal M$ such that
	\begin{equation}
	\label{eq:existence2a}
	\|\theta(a)\theta(b)\| < \epsilon \quad  \text{for }a,b \in \mathcal{F} \text{ satisfying }ab=0
	\end{equation} 
	and
	\begin{equation}
	\label{eq:existence2b}
	|\tau(\eta\circ\theta(a))- \alpha(\tau)(a)|<\epsilon, \quad a \in \mathcal F,\ \tau\in T(\mathcal M).
	\end{equation}
	Moreover, if for each $a \in \mathcal{F}$, we are given an element $c_a \in \mathcal M_{sa}$ satisfying
	\begin{equation}\label{eq:existence:neweq} \tau(c_a) = \alpha(\tau)(a), \quad \tau \in T(\mathcal M), \end{equation}
	then for each $a \in \mathcal F$ there exist $x^{(a)}_1,\dots,x_{10}^{(a)},y_1^{(a)},\dots,y_{10}^{(a)}\in \mathcal M$  such that
	\begin{equation}
	\label{eq:existence2c}
	\|\eta \circ \theta(a)-c_a- \sum_{i=1}^{10}[x_i^{(a)},y_i^{(a)}]\| < \epsilon.
	\end{equation}
	If $\alpha(T(\mathcal M))\subset T_{\mathrm{qd}}(A)$, then $\theta$ can be taken to satisfy
	\begin{equation}
	\label{eq:existence2d}
	\|\theta(a)\theta(b) - \theta(ab)\| < \epsilon \quad  \text{for }a,b \in \mathcal{F}.
	\end{equation} 	
\end{lemma}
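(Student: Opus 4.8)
The plan is to reduce to the single-trace building blocks of Lemma \ref{l:existence1} by cutting $\mathcal M$ along its centre. Write $Z = Z(\mathcal M)$ and let $\mathrm{Tr}\colon \mathcal M \to Z$ be the (normal) centre-valued trace. By Dixmier's averaging theorem together with norm-continuity of traces, every $\tau \in T(\mathcal M)$ factors as $\tau = (\tau|_Z)\circ \mathrm{Tr}$, so restriction identifies $T(\mathcal M)$ with the full state space $S(Z)$. For each $a \in \mathcal F$ the map $\tau \mapsto \alpha(\tau)(a)$ is continuous and affine on $T(\mathcal M) = S(Z)$, so Kadison's function representation yields a unique $\zeta_a \in Z_{sa}$ with $\tau(\zeta_a) = \alpha(\tau)(a)$ for all $\tau \in T(\mathcal M)$; crucially this holds for the non-normal traces too, which is what will make the argument uniform over all of $T(\mathcal M)$. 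Fix $\delta>0$ (to be taken $\ll \epsilon$). Since $\{\zeta_a : a \in \mathcal F\}$ is a finite commuting family of self-adjoint central elements, a joint spectral partition produces orthogonal central projections $z_1,\dots,z_m$ summing to $1$ across each of which every $\zeta_a$ oscillates by less than $\delta$. Choosing for each $j$ an extreme trace $\sigma_j \in T(\mathcal M)$ with $\sigma_j(z_j)=1$ and setting $\lambda_j := \alpha(\sigma_j) \in T_{\mathrm{am}}(A)$, we obtain $\|z_j\zeta_a - \lambda_j(a)z_j\| < \delta$ for all $a \in \mathcal F$ (as $\lambda_j(a) = \sigma_j(\zeta_a)$).

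For each $j$ with $z_j \neq 0$ the corner $z_j\mathcal M$ is again type II$_1$, so I would apply Lemma \ref{l:existence1} to $z_j\mathcal M$ and $\lambda_j$ to obtain a finite-dimensional $F_j$, a c.p.c.\ map $\theta_j\colon A \to F_j$, and a unital $^*$-homomorphism $\eta_j\colon F_j \to z_j\mathcal M$ satisfying \eqref{l:existence1:eq1}--\eqref{l:existence1:eq2} with constant $\delta$. Setting $F := \bigoplus_j F_j$, $\theta := (\theta_j)_j$ (c.p.c.) and $\eta := \bigoplus_j \eta_j$ (unital, since $\sum_j z_j = 1$), conclusion \eqref{eq:existence2a} is immediate from the block-wise estimates. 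For \eqref{eq:existence2b}, fix $\tau \in T(\mathcal M)$ and $a \in \mathcal F$; writing $\eta\theta(a) = \sum_j \eta_j\theta_j(a)$ with $\eta_j\theta_j(a) \in z_j\mathcal M$, and noting that each normalised restriction $\tau(z_j\,\cdot\,)/\tau(z_j)$ is a trace on $z_j\mathcal M$, the uniform-over-traces estimate \eqref{l:existence1:eq2} gives $\tau(\eta_j\theta_j(a)) \approx \lambda_j(a)\tau(z_j)$, while $\|z_j\zeta_a - \lambda_j(a)z_j\| < \delta$ gives $\tau(z_j\zeta_a) \approx \lambda_j(a)\tau(z_j)$. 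Summing over $j$ and using $\alpha(\tau)(a) = \tau(\zeta_a) = \sum_j \tau(z_j\zeta_a)$ yields \eqref{eq:existence2b} once $\delta$ is small enough.

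For the commutator statement \eqref{eq:existence2c}, suppose given $c_a \in \mathcal M_{sa}$ with $\tau(c_a) = \alpha(\tau)(a)$ for all $\tau$, and put $d_a := \eta\theta(a) - c_a \in \mathcal M_{sa}$. Combining \eqref{eq:existence2b} with the hypothesis on $c_a$ gives $|\tau(d_a)| < \epsilon$ for every $\tau \in T(\mathcal M)$; applying $\mathrm{Tr}$ and taking the supremum over states of $Z$ shows $\|\mathrm{Tr}(d_a)\| < \epsilon$. Then $d_a - \mathrm{Tr}(d_a)$ is self-adjoint with vanishing centre-valued trace, and I would invoke the quantitative fact that such an element in a type II$_1$ von Neumann algebra is a sum of ten commutators $\sum_{i=1}^{10}[x_i^{(a)}, y_i^{(a)}]$. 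Since $\eta\theta(a) - c_a - \sum_i [x_i^{(a)}, y_i^{(a)}] = \mathrm{Tr}(d_a)$ has norm $< \epsilon$, this establishes \eqref{eq:existence2c}. This commutator representation, producing a sum of a fixed number of commutators with the stated constant, is the main obstacle and the one genuinely external input; everything else is centre-cutting combined with Lemma \ref{l:existence1}.

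Finally, if $\alpha(T(\mathcal M)) \subset T_{\mathrm{qd}}(A)$, then each anchoring trace $\lambda_j = \alpha(\sigma_j)$ is a quasidiagonal trace on $A$ itself, so in place of the cone construction inside Lemma \ref{l:existence1} I would take $\theta_j\colon A \to F_j = M_{n_j}$ directly from the definition of quasidiagonality, making $\theta_j$ approximately multiplicative in operator norm on all of $\mathcal F$ (not merely on orthogonal pairs) while retaining $\mathrm{tr}_{F_j}\circ\theta_j \approx \lambda_j$. Embedding $F_j$ unitally and trace-preservingly into $z_j\mathcal M$ as before and reassembling gives \eqref{eq:existence2d}, with \eqref{eq:existence2a} a special case and the trace and commutator conclusions unaffected.
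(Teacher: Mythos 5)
Your proposal is correct and takes essentially the same route as the paper's proof: partition the centre into projections over which the tracial data $\tau\mapsto\alpha(\tau)(a)$ is nearly constant, apply Lemma \ref{l:existence1} to each corner $z_j\mathcal M$ with an anchoring trace, reassemble blockwise, obtain \eqref{eq:existence2c} from the Fack--de la Harpe ten-commutator theorem (\cite[Theorem 3.2]{FH80}) applied to the element with vanishing centre-valued trace, and substitute quasidiagonality directly for Lemma \ref{l:existence1} to get \eqref{eq:existence2d}. The only cosmetic difference is that you produce the central representatives $\zeta_a$ via Kadison's function representation on $S(Z(\mathcal M))$, whereas the paper obtains $c_a\in\mathcal M_{sa}$ from Cuntz--Pedersen (Proposition \ref{prop:CP}) and works with its image under the centre-valued trace.
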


\begin{proof}
	The idea is to glue maps from the previous lemma over the centre $Z(\mathcal M)$ of $\mathcal M$, in a manner similar to the proof of \cite[Lemma 2.5]{BCW16}.
	As for $a\in\mathcal F$, the elements $c_a$ satisfying \eqref{eq:existence:neweq} automatically exist by Proposition \ref{prop:CP}, we shall use them throughout the proof.
	Suppose $\mathcal{F}=\{a_1, \ldots, a_n \}$.
	By the structure of commutative von Neumann algebras (\cite[Theorem III.1.5.18]{Bla06}), let $(X,\mu)$ be a locally finite measure space such that $Z(\mathcal M) \cong L^\infty(X, \mu)$.
Let $E: \mathcal M \to L^\infty(X, \mu)$ be the centre-valued trace on $\mathcal M$ (see \cite[Theorem III.2.5.7]{Bla06}).
	Choose natural numbers $C\geq 4$ and $k$ such that $C > \sup_{a \in \mathcal{F}} \|c_a\|$ and $C/k < \epsilon$. 
	Set $I :=  \{-Ck+1, \ldots, Ck\}^{n}$ and for $r=(r_1, \ldots, r_n) \in I$, let $p_r \in L^\infty(X,\mu)$ be the characteristic function of the set
	\begin{equation}
	\{ x \in X :  \frac{r_j -1}{k} \leq E(c_{a_j})(x) < \frac{r_j}{k}, \, j=1, \ldots, n\}.
	\end{equation}
	By construction, $(p_r)_{r\in I}$ forms a partition of unity consisting of projections, and, as every trace on $\mathcal M$ factors though $E$ (\cite[Theorem III.2.5.7(iv)]{Bla06}),
	\begin{equation}
	\tau(c_{a_j}) \approx_{1/k} \sum_{r \in I} \frac{r_j}{k} \tau(p_r), \qquad \tau \in T(\mathcal M).\footnote{To improve the readability of this proof, we write $z_1 \approx_\eta z_2$ as shorthand for $|z_1 - z_2| \leq \eta$.} 
	\end{equation}
	In particular, for any $r \in I$ and $j = 1, \ldots, n$, we have
	\begin{equation}
	\alpha(\tau)(a_j) \stackrel{\eqref{eq:existence:neweq}}= \tau(c_{a_j}) \approx_{1/k} \frac{r_j}{k}, \qquad \tau \in T(p_r \mathcal M). \label{l:existence2:eq1}
	\end{equation}
	(Note that we implicitly extend $\tau$ to $\mathcal M$, by setting it to be zero on $(1-p_r) \mathcal M$, before we apply $\alpha$ in the previous equation.)
	
	Let $I_0: =\{r\in I:p_r\neq 0\}$. For each $r\in I_0$, fix $\sigma_r \in T(p_r \mathcal M)$ and set $\lambda_r := \alpha(\sigma_r)$.
	By Lemma \ref{l:existence1}, applied to $p_r \mathcal M$ and $\lambda_r$, there exist a finite dimensional algebra $F_r$, a c.p.c.\ map $\theta_r: A \to F_r$, and a unital $^*$-homomorphism $\eta_r:F_r \to p_r \mathcal M$ such that 
	\begin{equation}
	\|\theta_r(a)\theta_r(b)\| < \epsilon
	\end{equation}
	for $a,b \in \mathcal{F}$ satisfying $ab=0$, and
	\begin{equation}
	|\tau(\eta_r \circ \theta_r(a))-\lambda_r(a)|<\frac{\epsilon}{2}  \label{l:existence2:eq2}
	\end{equation}
	for $a \in \mathcal{F}$ and $\tau \in T(p_r \mathcal M)$.  
	Set $F:=\bigoplus_{r \in I_0} F_r$. Define $\theta: A \to F$ by $\theta(a) := \oplus_{r \in I_0} \theta_r (a)$ and $\eta: F \to\mathcal  M$ by $\eta((x_r)_{r \in I_0}) := \sum_{r \in I_0} \eta_r (x_r)$. By construction, $\eta$ is a unital $^*$-homomorphism and $\theta$ satisfies \eqref{eq:existence2a}.
	
	Note that if each $\lambda_r$ is quasidiagonal, this can be used directly in place of Lemma \ref{l:existence1} in the previous paragraph enabling $\theta_r$ to be chosen $(\mathcal{F},\e)$-approximately multiplicative. Therefore, if $\alpha(T(\mathcal M))\subset T_{\mathrm{qd}}(A)$, then $\theta$ can be taken to satisfy \eqref{eq:existence2d}.
	
	Fix $\tau \in T(\mathcal M)$ for the moment.
	For each $r\in I_0$, set $\tau_r := \frac{\tau(p_r \, \cdot )}{\tau(p_r)} \in T(p_r \mathcal M)$,\footnote{We can choose $\tau_r$ arbitrarily in case $\tau(p_r)=0$.} so $\tau$ can expressed as the convex combination
	\begin{equation}
	\tau = \sum_{r \in I_0} \tau(p_r) \tau_r. \label{l:existence2:eq3}
	\end{equation}
	Thus
	\begin{eqnarray}
	\notag
	\tau (\eta \circ \theta (a)) \hspace*{-5mm} &=& \sum_{r \in I_0} \tau(p_r) \tau_r (\eta_r \circ \theta_r (a)) \\
	\notag
	& \stackrel{\eqref{l:existence2:eq2}}{\capprox{\epsilon/2}}& \sum_{r \in I_0} \tau(p_r) \lambda_r (a) \\
	\notag
	& \stackrel{\eqref{l:existence2:eq1}}{\capprox{2/k}}&  \sum_{r \in I_0} \tau(p_r) \alpha(\tau_r) (a) \\
	\notag
	&=&\alpha(\sum_{r \in I_0} \tau(p_r) \tau_r) (a) \\
	& \stackrel{\eqref{l:existence2:eq3}}{=}& \alpha (\tau)(a) \label{l:existence2:eq4} 
	\end{eqnarray}
	for all $a \in \mathcal{F}$.
	Since $\frac2k\leq\tfrac C{2k}<\tfrac\epsilon2$ and $\tau \in T(\mathcal M)$ was arbitrary, this establishes \eqref{eq:existence2b}.
	
	Now fix $a \in \mathcal{F}$ for the moment and let us explain why $x^{(a)}_i,y^{(a)}_i$ can be found to satisfy \eqref{eq:existence2c}.
	Set $h:= E(\eta \circ \theta (a) - c_a ) \in L^\infty(X,\mu)$, which by \eqref{eq:existence2b} satisfies $\|h\|\leq \epsilon$. Observe that $E(\eta \circ \theta (a) - c_a - h)= 0$, so that by \cite[Theorem 3.2]{FH80} there exist $x^{(a)}_1,\dots,x_{10}^{(a)},y_1^{(a)},\dots,y_{10}^{(a)}\in \mathcal M$ such that\footnote{We can also arrange that $\max_{1\leq i \leq 10}\|x^{(a)}_i\|\|y^{(a)}_i\| \leq 12 \cdot 12 \cdot \|\eta \circ \theta (a) - c_a - h\|$, but we do not need to control the norms of these elements on this occasion.} 
	\begin{equation}
	\eta \circ \theta (a) - c_a - h = \sum_{i=1}^{10} [x^{(a)}_i,y^{(a)}_i].
	\end{equation}
Hence,
	\begin{equation}
	\| \eta \circ \theta (a) - c_a - \sum_{i=1}^{10} [x^{(a)}_i,y^{(a)}_i] \| = \| h \| <  \epsilon.\qedhere
	\end{equation}
\end{proof} 

We now turn to the existence component of Theorem \ref{thm:MainThm}. For this we will need a so-called `no silly traces' result to show that the limit traces on $B^\infty$ generate all traces on $B^\infty$.  For the purposes of Theorem \ref{thm:MainThm}, we could use (a sequence algebra version) of the original result of this type: \cite[Theorem 8]{Oz13} for $\mathcal Z$-stable exact $C^*$-algebras $B$. This gives a no silly traces result for the $C^*$-algebra ultraproduct, from which it follows that there are no silly traces on the uniform tracial ultraproduct (and this is easily modified to sequence algebras).  

However, in Theorem \ref{thm:Existence} below, we prefer not to impose the hypothesis that $B$ is $\mathcal Z$-stable, and instead simply ask that it has CPoU.
Correspondingly, we first show how to obtain a no silly traces result for the uniform tracial sequence algebra just assuming CPoU.  While CPoU is involved to handle possibly non-$\mathcal Z$-stable C*-algebras, to some extent the present result is easier than Ozawa's \cite[Theorem 8]{Oz13} in that we can use the uniform bounds on the number of commutators of a self-adjoint operator in a finite von Neumann algebra which vanishes in all traces from \cite{FH80}, rather than the more delicate growth rate estimates used in \cite{Oz13} which are required to eliminate silly traces from the $C^*$-norm sequence algebra or ultrapower.

We note also that no silly traces for the tracial product $B^\infty$ does not imply no silly traces for the norm product, as demonstrated by the unique trace example by Robert (based on earlier examples by Villadsen) in \cite[Theorem 1.4]{Ro15}.\footnote{Let $A$ be the $C^*$-algebra from \cite[Theorem 1.4]{Ro15} so that condition (iii)(a) from \cite[Theorem 3.24]{ART} fails for $A$.  Then no norm ultraproduct of $A$ can have unique trace (by the equivalence of condition (iii)(a) and the uniqueness of trace on an ultraproduct, which makes up the first part of the proof of (iii)$\Leftrightarrow$(ii) in \cite[Theorem 3.24]{ART}; see paragraph 3 of the proof, which notes this explicitly). Note that $A$ has the Diximer property needed to apply this result by \cite{HZ} as it is simple, unital and has unique trace. Therefore the norm product $A_\infty$ of infinitely many copies of $A$ has traces which are not in the closed convex hull of the limit traces.}

\begin{proposition}\label{prop:nosillytraces}
Let $B$ be a separable $C^*$-algebra with $T(B)$ compact and non-empty and which has CPoU.  Then the weak$^*$-closed convex hull of $T_\infty(B)$ is $T(B^\infty)$.
\end{proposition}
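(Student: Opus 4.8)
The plan is to combine a Hahn--Banach separation argument with the fact that commutators are annihilated by every trace, reducing the statement to the construction of a single \emph{exact} operator inequality in $B^\infty$, which is then assembled from von Neumann algebraic data via the Fack--Harpe bounded commutator theorem \cite{FH80}, CPoU, and Kirchberg's $\e$-test.

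First I would set $K:=\overline{\mathrm{conv}}(T_\infty(B))$ (the weak$^*$-closed convex hull), a weak$^*$-compact convex subset of the trace space $T(B^\infty)$, which is compact since $B^\infty$ is unital. As a weak$^*$-continuous affine functional attains the same supremum over $K$ as over $T_\infty(B)$, the equality $K=T(B^\infty)$ is equivalent, by Hahn--Banach separation applied to self-adjoint elements, to the assertion that for every self-adjoint $a\in B^\infty$,
\begin{equation}
\sup_{\tau\in T(B^\infty)}\tau(a)\ \le\ \gamma:=\sup_{\tau\in T_\infty(B)}\tau(a).
\end{equation}
Since $\tau([x,y])=0$ for all $\tau\in T(B^\infty)$ and $x,y\in B^\infty$, it then suffices to produce, for each $\e>0$, elements $x_1,y_1,\dots,x_{10},y_{10}\in B^\infty$ with
\begin{equation}
a-\sum_{i=1}^{10}[x_i,y_i]\ \le\ (\gamma+\e)1
\end{equation}
as an operator inequality in $B^\infty$: applying an arbitrary trace gives $\tau(a)\le\gamma+\e$, and $\e\to 0$ finishes. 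Crucially, this inequality controls even the potentially `silly' traces, although its construction will only see limit traces.

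Because $\|\cdot\|_{2,T_\infty(B)}$ is a genuine norm on $B^\infty$, the operator inequality above is equivalent to the vanishing of $\|((a-\sum_i[x_i,y_i])-(\gamma+\e)1)_+\|_{2,T_\infty(B)}$. Keeping the number of commutators fixed at $10$ (so candidate tuples of bounded representative sequences form a fixed parameter space), Kirchberg's $\e$-test (Lemma \ref{lem:EpsTest}) reduces the task to the approximate version: for every $\delta>0$ find $x_i,y_i$ making this quantity $<\delta$. This I would prove by the CPoU scheme used in Proposition \ref{unitaryexp} and Theorem \ref{thm:Uniqueness}. For each limit trace $\tau$, work in $\mathcal M_\tau:=\pi_\tau(B^\infty)''$, let $E_\tau$ be its centre-valued trace, and apply \cite{FH80} to write $\pi_\tau(a)-E_\tau(\pi_\tau(a))$ as a sum of $10$ commutators with norm control; lifting these through $\pi_\tau$ via Kaplansky density yields $x_i^\tau,y_i^\tau\in B^\infty$ for which the defect $b_\tau:=((a-\sum_i[x_i^\tau,y_i^\tau])-(\gamma+\e)1)_+^2\in(B^\infty)_+$ has $\tau(b_\tau)$ small, \emph{provided} $E_\tau(\pi_\tau(a))\le\gamma 1$. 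A compactness argument over the limit traces then gives finitely many $\tau_1,\dots,\tau_k$ with $\min_j\tau(b_{\tau_j})$ small for all $\tau\in T_\infty(B)$, and CPoU (Lemma \ref{lem:CPoU-Infinity}) supplies orthogonal projections $e_1,\dots,e_k$ summing to $1$, commuting with every $x_i^{\tau_j},y_i^{\tau_j}$, with $\tau(b_{\tau_j}e_j)\le(\text{small})\,\tau(e_j)$. The block-diagonal elements $x_i:=\sum_j e_jx_i^{\tau_j}$, $y_i:=\sum_j e_jy_i^{\tau_j}$ satisfy $\sum_i[x_i,y_i]=\sum_j e_j\sum_i[x_i^{\tau_j},y_i^{\tau_j}]$, and the same computation as in Theorem \ref{thm:Uniqueness} delivers the required approximate inequality.

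The main obstacle is justifying the hypothesis $E_\tau(\pi_\tau(a))\le\gamma 1$ that powers the local step, i.e.\ the `goodness' of limit traces; this is exactly where the absence of silly traces on $B$ \emph{itself} enters non-circularly. Identifying $\mathcal M_\tau$ with a tracial ultraproduct of the finite von Neumann algebras $\pi_{\sigma_n}(B)''$, where $\tau=\lim_{n\to\omega}\sigma_n$, every normal trace of $\mathcal M_\tau$ arises from a sequence of traces of the $\pi_{\sigma_n}(B)''$, which restrict to honest traces in $T(B)$ and hence are limit traces bounded by $\gamma$ on $a$; so every value of the central element $E_\tau(\pi_\tau(a))$ is at most $\gamma$. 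Verifying this identification and the attendant control of the centre-valued trace --- together with arranging the compactness step over (the weak$^*$-closure of) $T_\infty(B)$ --- is the technical heart. I expect everything else to be comparatively painless: the \emph{bounded} commutator count from \cite{FH80} keeps the $\e$-test parameter space fixed, which is precisely the simplification over the growth-rate estimates of \cite{Oz13}, while the decisive upgrade from a limit-trace estimate to an honest operator inequality (and thus to control of all of $T(B^\infty)$) is provided automatically by the $\e$-test and the fact that $\|\cdot\|_{2,T_\infty(B)}$ separates points of $B^\infty$.
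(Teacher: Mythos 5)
Your global scaffolding is sound and matches the paper's methodology: the Hahn--Banach reduction to showing $\sup_{\tau\in T(B^\infty)}\tau(a)\le\sup_{\tau\in T_\infty(B)}\tau(a)$ for self-adjoint $a$ is exactly what the paper gets by citing \cite[Lemma 4.4]{CETWW}; the upgrade from an approximate to an exact statement via Kirchberg's $\e$-test with a \emph{fixed} commutator count is legitimate (the paper instead produces an exact identity $z-c=K\sum_{i=1}^{10}[x^{(i)},y^{(i)}]$ with $\|c\|\le\delta$ by a diagonal argument over representative sequences, but your operator inequality $a-\sum_i[x_i,y_i]\le(\gamma+\e)1$ would serve equally well to control silly traces); and the compactness-plus-CPoU gluing pattern is the right one. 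The problems are concentrated exactly where you locate the ``technical heart,'' and as written the argument does not close. Even at a genuine limit trace $\tau=\lim_{n\to\omega}\sigma_n$, the two facts you invoke --- that $\pi_\tau(B^\infty)''$ is the tracial von Neumann ultraproduct $\prod^\omega\pi_{\sigma_n}(B)''$, and that every normal tracial state of such an ultraproduct arises from a sequence of tracial states on the fibres --- are theorems needing proof, not observations. The first is a Kaplansky-density argument; the second requires at least the identification $Z(\prod^\omega M_n)=\prod^\omega Z(M_n)$ (so that a spectral projection of $E_\tau(\pi_\tau(a))$ admits \emph{central} representatives $p_n\in Z(\pi_{\sigma_n}(B)'')$, making $\sigma_n(p_n\,\cdot)/\sigma_n(p_n)$ honest traces on $B$), which one can extract from the Dixmier property of finite von Neumann algebras via $\|x-E(x)\|_{2,\rho}\le\sup_u\|[x,u]\|_{2,\rho}$. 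These are fillable, but they are lemmas' worth of von Neumann algebra theory that the paper never needs.

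The fatal structural issue is the compactness step. $T_\infty(B)$ is not weak$^*$-compact, so the covering argument producing $\tau_1,\dots,\tau_k$ must run over the weak$^*$-closure $\overline{T_\infty(B)}$ (this is precisely how Theorem \ref{thm:Existence} proceeds), and hence the local step must be available at \emph{every} $\tau_*\in\overline{T_\infty(B)}$. But for $\tau_*$ in the closure that is not itself a limit trace there is no ultraproduct picture of $\pi_{\tau_*}(B^\infty)''$, and your justification of $E_{\tau_*}(\pi_{\tau_*}(a))\le\gamma 1$ evaporates; indeed, bounding the normal tracial states of $\pi_{\tau_*}(B^\infty)''$ by $\gamma$ for an arbitrary trace in the closure is a statement of essentially the same nature as the proposition you are proving. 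You cannot dodge this by covering only $T_\infty(B)$: if no finite subfamily works, a weak$^*$-cluster point of the witnessing ``bad'' traces lies in $\overline{T_\infty(B)}$, and since your test elements $b_\sigma$ are indexed only by $\sigma\in T_\infty(B)$, no contradiction results. The paper's proof is engineered to avoid exactly this: it fixes a representative sequence $(z_n)$ and, for each fixed $n$, runs the Fack--de la Harpe argument in $\pi_\tau(B)''$ for honest traces $\tau\in T(B)$, where every normal tracial state restricts through $\pi_\tau$ to an element of $T(B)$ (so the centre-valued trace bound $\|\tilde c_{n,\tau}\|\le\delta$ is trivial) and where $T(B)$ is compact \emph{by hypothesis} (so the covering argument is immediate); moreover, because the resulting test elements lie in $B$, the minimum condition transfers from $T(B)$ to $T_\infty(B)$ simply by restricting limit traces to $B$, which is what makes Lemma \ref{lem:CPoU-Infinity} applicable. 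CPoU then glues within $B^\infty$ for each $n$, the glued data is pushed back into $B$ far along representative sequences, and a diagonal argument over $n$ replaces your $\e$-test. To repair your proof, keep your Hahn--Banach and $\e$-test framing but replace your local step with this per-$n$, per-$\tau\in T(B)$ one.
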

\begin{proof}
Fix a self-adjoint contraction $z\in B^\infty$. Let $\delta:= \sup_{\tau\in T_\infty(B)}|\tau(z)|$. By \cite[Lemma 4.4]{CETWW} (which is extracted from \cite[Theorem 8]{Oz13}), it suffices to prove that $\sup_{\tau\in T(B^\infty)}|\tau(z)|=\delta$.  This will be achieved by producing a self-adjoint $c\in B^\infty$ with $\|c\|\leq\delta$ and contractions $x^{(1)},\dots,x^{(10)},y^{(1)},\dots,y^{(10)}\in B^\infty$ such that $z-c=K\sum_{i=1}^{10}[x^{(i)},y^{(i)}]$, where $K:=12\cdot 12(1+\delta)$.  

Choose a representative sequence $(z_n)_{n=1}^\infty$ of self-adjoint contractions for
 $z\in B^\infty$. Then $\limsup_{n\to\infty}\sup_{\tau\in T(B)}|\tau(z_n)|\leq\delta$, and so by rescaling, we may assume $\sup_{\tau\in T(B)}|\tau(z_n)|\leq\delta$ for each $n$.  

Fix $n\in\mathbb N$ for the moment. For each $\tau\in T(B)$, let $\pi_\tau$ be its GNS-representation and $\mathcal M_\tau:=\pi_\tau(B)''$. 
Then $\sup_{\rho\in T(\mathcal M_\tau)}|\rho(\pi_\tau (z_n))|\leq\delta$. Letting $\tilde{c}_{n,\tau}\in\mathcal M_\tau$ be the result of applying the centre-valued trace in $\mathcal M_\tau$ to $\pi_\tau(z_n)$, we have $\|\tilde{c}_{n,\tau}\|\leq\delta$.  By \cite[Theorem 3.2]{FH80}, there exist contractions $\tilde{x}_{n,\tau}^{(1)},\dots,\tilde{x}^{(10)}_{n,\tau},\tilde{y}_{n,\tau}^{(1)},\dots,\tilde{y}^{(10)}_{n,\tau}\in\mathcal M_\tau$ with $\pi_\tau(z_n)-\tilde{c}_n=K\sum_{i=1}^{10}[\tilde{x}^{(i)}_{n,\tau},\tilde{y}^{(i)}_{n,\tau}]$.  By Kaplansky's density theorem, there exists a self-adjoint $c_{n,\tau} \in B$ with $\|c_{n,\tau}\|\leq\delta$ and contractions $x^{(1)}_{n,\tau},\dots,x^{(10)}_{n,\tau},y^{(1)}_{n,\tau},\dots,y^{(10)}_{n,\tau}\in B$ with
\begin{equation}
\|z_n-c_{n,\tau}-K\sum_{i=1}^{10}[x^{(i)}_{n,\tau},y^{(i)}_{n,\tau}]\|_{2,\tau} < \gamma_n,
\end{equation}
where $\gamma_n < \tfrac1n$.

Let $a_{n,\tau}:=|z_n-c_{n,\tau}-K\sum_{i=1}^{10}[x^{(i)}_{n,\tau},y^{(i)}_{n,\tau}]|^2$. By compactness, there exist $\tau_{n,1},\dots,\tau_{n,k_n}$ such that $\min_{\rho\in T(B)}\{\rho(a_{n,\tau_{n,1}}),\dots,\rho(a_{n,\tau_{n,k_n}})\}<\gamma_n^2$.  As every trace in $T_\infty(B)$ restricts to a trace on $B$, the same minimum holds over $\rho\in T_\infty(B)$. Let $S_n \subset B^\infty$ be the separable subalgebra generated by $z_n$ together with $c_{n,\tau_{n,1}},\dots,c_{n,\tau_{n,k_n}}$ and the contractions $x^{(1)}_{n,\tau_{n,1}},\dots,x^{(10)}_{n,\tau_{n,k_n}}$ and $y^{(1)}_{n,\tau_{n,1}},\dots,y^{(10)}_{n,\tau_{n,k_n}}$.
 
By CPoU in the form of Lemma \ref{lem:CPoU-Infinity}, there exist pairwise orthogonal projections $e_{n,1},\dots,e_{n, k_n}$ in $B^\infty \cap S_n'$ which sum to $1_{B^\infty}$ and have $\rho(a_{n, \tau_{n,j}} e_{n,j})\leq \gamma_n^2 \rho(e_{n,j})$ for $j=1,\dots,k_n$ and all $\rho\in T_\infty(B)$. 

Define $\tilde{c}_n:=\sum_{j=1}^{k_n}c_{n,\tau_{n,j}}e_{n,j} \in B^\infty$, $\tilde{x}^{(i)}_n:=\sum_{j=1}^{k_n}x^{(i)}_{n,\tau_{n,j}}e_{n,j}  \in B^\infty$, and $\tilde{y}^{(i)}_n:=\sum_{j=1}^{k_n}y^{(i)}_{n,\tau_{n,j}}e_{n,j}  \in B^\infty$. Then $\|\tilde{c}_n\| \leq \delta$ and all the $\tilde{x}^{(i)}_n$ and $\tilde{y}^{(i)}_n$ are contractions. 
Let $\rho\in T_\infty(B)$. Using the properties of the $e_{n,j}$, we have
\begin{align}
\|z_n-\tilde{c}_{n}-K\sum_{i=1}^{10}[\tilde{x}^{(i)}_{n},\tilde{y}^{(i)}_{n}]\|_{2,\rho}^2=\rho(\sum_{j=1}^{k_n}a_{n,\tau_{n,j}}e_{n,j}) \leq \gamma_n^2.
\end{align}

Taking norm preserving lifts from $B^\infty$ to $\ell^\infty(B)$ and then choosing elements $c_n$, $x^{(i)}_n$, $y^{(i)}_n \in B$ sufficiently far down the representative sequences for $\tilde{c}_n, \tilde{x}^{(i)}_n,\tilde{y}^{(i)}_n \in B^\infty$, we have
\begin{equation}
\sup_{\tau\in T(B)}\|z_n-c_n-K\sum_{i=1}^{10}[x^{(i)}_n,y^{(i)}_n]\|_{2, \tau} \leq \gamma_n
\end{equation}
and $\|c_n\| \leq \delta$.  Assembling these into $c:=(c_n)_{n=1}^\infty$, $x^{(i)}:=(x^{(i)}_n)_{n=1}^\infty$ and $y^{(i)}:=(y_n^{(i)})_{n=1}^\infty$ in $B^\infty$ provides the elements demanded in the first paragraph of the proof.
\end{proof}

We can now give our more general version of the existence aspect of Theorem \ref{thm:MainThm}. The condition that $A$ be nuclear is weakened to the condition that the range of $\alpha$ is contained in the set $T_{\mathrm{am}}(A)$ of amenable traces on $A$.  The second part of the following theorem, regarding the form of a representative sequence for $\phi$, is not needed for Theorem \ref{thm:MainThm}, but we anticipate it playing a role in future nuclear dimension computations. Recall that a c.p.\ map $\phi:A \rightarrow B$ is said to be \emph{order zero} if $\phi(a)\phi(b) = 0$ for all $a,b \in A_+$ with $ab = 0$; 
see \cite{WZ09} for the structure theory of these maps.

\begin{theorem}
\label{thm:Existence}
Let $A$ be a separable $C^*$-algebra and let $B$ be a separable $C^*$-algebra with $T(B)$ compact and non-empty which has CPoU and no finite dimensional representations. Given a continuous affine function $\alpha:T(B^\infty) \to T_{\mathrm{am}}(A)$, there exists a $^*$-homomorphism $\phi:A \to B^\infty$ such that
\begin{equation}
\label{eq:Existence1}
 \tau \circ \phi = \alpha(\tau), \quad \tau \in T(B^\infty). \end{equation}
Moreover, $\phi$ can be represented by a sequence $(\phi_n)_{n=1}^\infty$ of c.p.c.\ maps $A\rightarrow B$ each of which factorises as $\phi_n=\psi_n\circ\theta_n$ for a c.p.c.\ map $\theta_n:A\rightarrow F_n$ with $F_n$ finite dimensional, and a c.p.c.\ order zero map $\psi_n:F_n\rightarrow B$.  The maps $\theta_n$ can be taken to be approximately order zero, and if the range of $\alpha$ lies in the quasidiagonal traces on $A$, the $\theta_n$ can be taken to be approximately multiplicative.
\end{theorem}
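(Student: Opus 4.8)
The plan is to follow the template of Proposition~\ref{unitaryexp} and Theorem~\ref{thm:Uniqueness}: solve the problem inside the tracial von Neumann completion at each trace by means of Lemma~\ref{l:existence2}, glue the local solutions with CPoU, and pass to the limit with Kirchberg's $\epsilon$-test (Lemma~\ref{lem:EpsTest}). The genuinely new difficulty is that the target condition \eqref{eq:Existence1} is an \emph{exact} equality of traces, and such an equality is not the vanishing of any $2$-seminorm, so it cannot be fed directly into the CPoU machinery, which only glues data that is small in $\|\cdot\|_{2,T_\infty(B)}$. I would resolve this exactly as Lemma~\ref{l:existence2} does at the von Neumann level: first apply Proposition~\ref{prop:CP} to $B^\infty$ to produce, for each self-adjoint $a\in A$, a self-adjoint $c_a\in B^\infty$ with $\tau(c_a)=\alpha(\tau)(a)$ for all $\tau\in T(B^\infty)$; then arrange in the limit that $\phi(a)-c_a$ is a sum of commutators. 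Since commutators are tracially null this forces $\tau\circ\phi=\alpha$ on the nose, and, crucially, the quantity measuring how far $\phi(a)$ is from $c_a$ plus commutators \emph{is} a $2$-norm, hence is CPoU-compatible. Thus, using separability of $A$ and Lemma~\ref{lem:EpsTest}, it suffices to produce, for each finite self-adjoint $\mathcal F\subset A$ and each $\epsilon>0$, a c.p.c.\ map $\phi_0=\psi_0\circ\theta_0\colon A\to B^\infty$ of the advertised factorised form, together with elements $X_i^{(a)},Y_i^{(a)}\in B^\infty$, such that $\theta_0$ is $\epsilon$-order zero on $\mathcal F$ and $\big\|\phi_0(a)-c_a-\sum_{i=1}^{10}[X_i^{(a)},Y_i^{(a)}]\big\|_{2,T_\infty(B)}<\epsilon$ for all $a\in\mathcal F$.

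To build this data I would work one trace at a time. Fix $\tau\in T(B^\infty)$ and set $\mathcal M_\tau:=\pi_\tau(B^\infty)''$, a finite von Neumann algebra with faithful trace $\bar\tau$. Here the hypothesis that $B$ has no finite-dimensional representations enters: it guarantees that $\mathcal M_\tau$ is of type II$_1$, as required by Lemma~\ref{l:existence2}. For a limit trace $\tau$ induced by $(\tau_n)$ this is transparent, since $\mathcal M_\tau$ is then the von Neumann algebra ultraproduct $\prod_\omega\pi_{\tau_n}(B)''$ and each factor is type II$_1$ (a finite type I summand would yield, through the GNS map of $\tau_n$, a finite-dimensional representation of $B$); the general case is reduced to limit traces via Proposition~\ref{prop:nosillytraces}. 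I would then apply Lemma~\ref{l:existence2} to $\mathcal M_\tau$, to the continuous affine map $T(\mathcal M_\tau)\xrightarrow{\;\mathrm{restr.}\;}T(B^\infty)\xrightarrow{\;\alpha\;}T_{\mathrm{am}}(A)$ obtained by restriction along $\pi_\tau$, to $\mathcal F$, and to the elements $\pi_\tau(c_a)$, which satisfy \eqref{eq:existence:neweq} in $\mathcal M_\tau$. This produces a finite-dimensional $F_\tau$, an approximately order zero c.p.c.\ map $\theta_\tau\colon A\to F_\tau$, a unital $^*$-homomorphism $\eta_\tau\colon F_\tau\to\mathcal M_\tau$, and commutators realising \eqref{eq:existence2c}. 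Lifting $\eta_\tau$ through Kaplansky's density theorem to a c.p.c.\ order zero map $\psi_\tau\colon F_\tau\to B^\infty$, and lifting the commutators to $B^\infty$, I set $\phi_\tau:=\psi_\tau\circ\theta_\tau$ and, writing $w_{a,\tau}:=\phi_\tau(a)-c_a-\sum_i[x_i^{(a,\tau)},y_i^{(a,\tau)}]$, I record the positive element $b_\tau:=\sum_{a\in\mathcal F}|w_{a,\tau}|^2\in(B^\infty)_+$, which by construction satisfies $\bar\tau(b_\tau)<\epsilon'$ for a suitably small $\epsilon'$.

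Next I would glue. By weak$^*$-continuity of $\rho\mapsto\rho(b_\tau)$ and compactness of $T(B^\infty)$, there exist $\tau_1,\dots,\tau_k\in T(B^\infty)$ with $\min_j\rho(b_{\tau_j})<\epsilon'$ for every $\rho\in T(B^\infty)$, hence for every $\rho\in T_\infty(B)$. CPoU, in the form of Lemma~\ref{lem:CPoU-Infinity}, then provides orthogonal projections $e_1,\dots,e_k$ summing to $1_{B^\infty}$, lying in the commutant of a separable set containing the $\phi_{\tau_j}(\mathcal F)$, the ranges of the $\psi_{\tau_j}$, the commutator elements, and the $c_a$, with $\rho(e_jb_{\tau_j})\le\epsilon'\rho(e_j)$ for all $\rho\in T_\infty(B)$. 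Setting $\theta_0:=\bigoplus_j\theta_{\tau_j}\colon A\to\bigoplus_jF_{\tau_j}$ (still approximately order zero), $\psi_0:=\sum_j e_j\psi_{\tau_j}$ (a c.p.c.\ order zero map, as the $e_j$ are orthogonal projections commuting with the orthogonal order zero maps $\psi_{\tau_j}$), $\phi_0:=\psi_0\circ\theta_0=\sum_j e_j\phi_{\tau_j}$, and $X_i^{(a)}:=\sum_j e_jx_i^{(a,\tau_j)}$, $Y_i^{(a)}:=\sum_j e_jy_i^{(a,\tau_j)}$, the orthogonality and commutation of the $e_j$ give $\phi_0(a)-c_a-\sum_i[X_i^{(a)},Y_i^{(a)}]=\sum_j e_jw_{a,\tau_j}$, whose squared $\|\cdot\|_{2,\rho}$-norm is $\sum_j\rho(e_j|w_{a,\tau_j}|^2)\le\sum_j\rho(e_jb_{\tau_j})\le\epsilon'$; taking the supremum over $\rho\in T_\infty(B)$ and choosing $\epsilon'\le\epsilon^2$ gives the required estimate.

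Finally, Lemma~\ref{lem:EpsTest} produces a single sequence of factorised maps $\phi_n=\psi_n\circ\theta_n$ (with $\theta_n$ approximately order zero, or approximately multiplicative when the range of $\alpha$ lies in $T_{\mathrm{qd}}(A)$, via \eqref{eq:existence2d}) defining $\phi\colon A\to B^\infty$ with $\phi(a)-c_a=\sum_i[X_i^{(a)},Y_i^{(a)}]$ \emph{exactly} in $B^\infty$. As a limit of approximately order zero maps, $\phi$ is c.p.c.\ order zero, and since the commutators are tracially null we obtain $\tau(\phi(a))=\tau(c_a)=\alpha(\tau)(a)$ for every $\tau\in T(B^\infty)$, which is \eqref{eq:Existence1}; in particular $\tau(\phi(1))=1$ for all $\tau$, so $\phi(1)=1_{B^\infty}$ (a positive contraction of full trace in the norm $\|\cdot\|_{2,T_\infty(B)}$; for non-unital $A$ one argues with an approximate unit), and a unital c.p.c.\ order zero map is automatically a $^*$-homomorphism, yielding both the main statement and the factorised representing sequence. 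The main obstacle, in my view, is the structural input that $\pi_\tau(B^\infty)''$ is type II$_1$ for \emph{all} $\tau\in T(B^\infty)$: it is precisely here that the absence of finite-dimensional representations is used, and while the limit trace case is the clean ultraproduct statement above, the reduction of general traces to limit traces (and the non-unital case) requires care.
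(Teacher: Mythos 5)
Your proposal follows the same skeleton as the paper's proof: realise $\alpha$ by elements $c_a$ via Proposition \ref{prop:CP}, apply Lemma \ref{l:existence2} in $\pi_\tau(B^\infty)''$ one trace at a time, lift back to $B^\infty$ (note that lifting $\eta_\tau$ to a c.p.c.\ order zero $\psi_\tau$ needs the order zero Kaplansky theorem \cite[Lemma 1.1]{HKW12} together with projectivity \cite[Proposition 1.2.4]{Wi09}, not just the classical Kaplansky theorem), glue with CPoU exactly as you describe, and finish with Kirchberg's $\epsilon$-test. The genuine difference is the endgame. The paper discards the commutator elements before the $\epsilon$-test: it carries only the estimate $|\tau(\psi(\theta(a)))-\alpha(\tau)(a)|\leq\epsilon$ over limit traces, obtains \eqref{eq:Existence1} on $T_\infty(B)$ in the limit, and invokes Proposition \ref{prop:nosillytraces} once, at the outset, to upgrade to all of $T(B^\infty)$ (the defect $\tau\mapsto\tau(\phi(a))-\alpha(\tau)(a)$ is affine and weak$^*$-continuous, so vanishing on $T_\infty(B)$ forces vanishing on its closed convex hull). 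You instead carry the commutator elements through the $\epsilon$-test so that $\phi(a)-c_a$ is \emph{exactly} a sum of ten commutators in $B^\infty$; this re-runs, inside the existence proof, the mechanism of the proof of Proposition \ref{prop:nosillytraces}. Both upgrades are sound in principle, but yours creates two obligations that you do not discharge.

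First, norm control. For the $\epsilon$-test to output elements $X_i^{(a)},Y_i^{(a)}\in B^\infty$, the commutator elements produced at each finite stage must be uniformly norm bounded, by a bound depending only on $a$; otherwise the coordinatewise sequences produced by Lemma \ref{lem:EpsTest} need not lie in $\ell^\infty(B)$, and the claimed exact identity in $B^\infty$ is meaningless. Lemma \ref{l:existence2} as stated provides no such bound --- its footnote records that control is available precisely because the paper does \emph{not} need it --- so your route must use the norm-controlled form of \cite[Theorem 3.2]{FH80} (rescaling to contractions against a fixed constant $K$, exactly as in the proof of Proposition \ref{prop:nosillytraces}) and check that the bound survives the Kaplansky lifts and the CPoU gluing (it does, since the $e_j$ are orthogonal projections commuting with everything in sight). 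This is fixable, but in your argument it is essential and has to be said.

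Second, the type II$_1$ claim. Your covering argument fixes an arbitrary $\tau\in T(B^\infty)$, so you need $\pi_\tau(B^\infty)''$ to be type II$_1$ for \emph{every} trace; your argument covers honest limit traces, but ``reduction to limit traces via Proposition \ref{prop:nosillytraces}'' is not a formal reduction: neither convex combinations nor weak$^*$ limits of traces relate the corresponding GNS von Neumann algebras in any useful way. What actually works is the paper's direct argument: an approximate unit $(e_m)$ of $B$ satisfies $\inf_{\rho\in T(B)}\rho(e_m)\to 1$ by Dini's theorem, hence $\tau(e_m)\to 1$ for all $\tau\in T(B^\infty)$ (here Proposition \ref{prop:nosillytraces} is used to pass from $T_\infty(B)$ to its closed convex hull), so $\pi_\tau(e_m)\to 1$ strongly; a nonzero type I$_k$ central summand $p$ would force $p\pi_\tau(B)=0$ because $B$ has no finite dimensional representations, contradicting $p\pi_\tau(e_m)\to p\neq 0$. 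Alternatively --- and this would make your proof more self-contained than the paper's in one respect --- run the covering over the weak$^*$-closure $\overline{T_\infty(B)}$ only: it is compact, it suffices for Lemma \ref{lem:CPoU-Infinity} (whose hypothesis only tests limit traces), and there $\tau(e_m)\to 1$ follows from Dini alone. Since your exact-commutator endgame yields \eqref{eq:Existence1} for \emph{all} traces without any no-silly-traces input, this variant would avoid Proposition \ref{prop:nosillytraces} entirely, at the price of the bookkeeping in the previous paragraph.
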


\begin{proof}
We first note that any c.p.c.\ order zero map $\phi:A \to B^\infty$ satisfying \eqref{eq:Existence1} is automatically a $^*$-homomorphism.
Indeed, let $(e_n)_{n=1}^\infty$ be an increasing approximate unit for $A$. Then 
\begin{equation}\label{eq:OZ}
	\phi(a_1)\phi(a_2) = \lim_{n\rightarrow\infty} \phi(e_n)\phi(a_1a_2), \quad a_1,a_2 \in A, 
\end{equation} 
as a consequence of \cite[Corollary 4.1]{WZ09}. It therefore suffices to prove that $\lim_{n\rightarrow\infty} \phi(e_n) = 1_{B^\infty}$ in $\|\cdot\|_{2,T_\infty(B)}$. We compute that 
\begin{align}
\notag
	\|1_{B^\infty}-\phi(e_n)\|_{2,T_\infty(B)}^2 &\leq \sup_{\tau \in T(B^\infty)} \tau(|1_{B^\infty}-\phi(e_n)|^2)\\
\notag
	&\leq \sup_{\tau \in T(B^\infty)} \tau(1_{B^\infty}-\phi(e_n))\\
\notag
	&\leq \sup_{\tau \in T(B^\infty)} 1 - \tau(\phi(e_n))\\
	&= \sup_{\tau \in T(B^\infty)} (1 - \alpha(\tau)(e_n))\rightarrow 0,
\end{align}
as by Dini's Theorem, $\alpha(\tau)(e_n)$ converges to 1 uniformly on $T(B^\infty)$.

Notice that due to Proposition \ref{prop:nosillytraces}, it is enough to establish equation \eqref{eq:Existence1} for limit traces.
Fix $\e>0$ and finite sets $\mathcal F\subset A$ and $\mathcal G \subset A_{sa}$. We will prove that there is a finite dimensional $C^*$-algebra $F$, a c.p.c.\ map $\theta:A\rightarrow F$ and a c.p.c.\ order zero map $\psi:F\rightarrow B^\infty$ such that for $\phi=\psi\circ\theta$ we have
\begin{align}
	\|\theta(a)\theta(b)\| &\leq \epsilon \quad && \text{for }a,b \in \mathcal F \text{ satisfying }ab=0\text{, and} \label{eq:Existence2a} \\
	|\tau (\phi(a))-\alpha(\tau)(a)|&\leq \epsilon &&\text{for }a \in \mathcal G \text{ and }\tau \in T_\infty(B). \label{eq:Existence2b}
\end{align}
In the special case that $\alpha(T(B^\infty))\subseteq T_{\mathrm{qd}}(A)$, we will show that we can additionally replace (\ref{eq:Existence2a}) by the stronger condition
\begin{equation}\label{eq:Existence2c}
\|\theta(a)\theta(b)-\theta(ab)\|\leq\epsilon,\quad a,b\in\mathcal F.
\end{equation}
Once this is achieved, an application of Kirchberg's $\e$-test (in the form of Lemma \ref{lem:EpsTest}) can be used to obtain the required $\phi$ (and $\psi_n,\theta_n$) in a very similar fashion to \cite[Lemma 7.4]{BBSTWW}. We set this out for the passage from (\ref{eq:Existence2a}) and (\ref{eq:Existence2b}) to the required $\phi$ such that $(\theta_n)$ are approximately order zero in  the next paragraph. The passage from (\ref{eq:Existence2c}) and (\ref{eq:Existence2b}) to obtaining a $\phi$ such that the maps $(\theta_n)_{n=1}^\infty$ are approximately multiplicative is similar (and slightly easier).

For each $n$, let $X_n$ denote the set of triples $(F_n,\theta_n,\psi_n)$, where $F_n$ is a finite dimensional $C^*$-algebra $F_n$, $\theta_n:A\to F_n$ is c.p.c.\ and $\psi_n:F_n\to B$ is c.p.c.\ order zero.\footnote{Although all the $X_n$ represent the same set, we use the subscript $n$ for direct comparison with the notation of the $\e$-test.}  Fix a countable dense subset $(x_k)_{k=1}^\infty$ of $A_{\mathrm{sa}}$. Noting that the collection of pairs $(a,b)$ in $A_+$ with $ab=0$ is a subspace of the separable metric space $A \times A$, we may also fix a countable dense subset $(a_k,b_k)_{k=1}^\infty$ of these orthogonal pairs.  Set $X:=\prod_{n=1}^\infty X_n$ and define functions $f^{(k)}_n:X_n\to[0,\infty]$ by
\begin{align}
\notag
f^{(k)}_n(F_n,\theta_n,\psi_n):=\max_{j\leq k}&\big(\|\theta_n(a_jb_j)\| \\
&+\sup_{\tau\in T(B)}|\tau(\psi_n(\theta_n(x_j)))-\alpha(\tau(x_j))|\big),
\end{align}
for $(F_n,\theta_n,\psi_n)\in X_n$.  Given $\e>0$ and $k_0\in\mathbb N$, let $\mathcal{F}:=\{a_k,b_k:k\leq k_0\}$ and $\mathcal G:=\{x_k:k\leq k_0\}$ and take $F,\theta,\psi$ satisfying (\ref{eq:Existence2a}) and \eqref{eq:Existence2b}.  Then $\psi$ lifts to a sequence $(\tilde\psi_n)_{n=1}^\infty$ of c.p.c.\ order zero maps $F\to B$ by projectivity of c.p.c.\ order zero maps with finite dimensional domains (\cite[Proposition 1.2.4]{Wi09}, which rephrases Loring's work \cite[Theorem 4.9]{Lo93} on projectivity of cones over finite dimensional $C^*$-algebras to this setting).  The sequence $\big(F,\theta,\tilde\psi_n\big)_{n=1}^\infty$ in $\prod_{n=1}^\infty X_n$ satisfies
\begin{equation}
\limsup_n f^{(k)}_n\big(F,\theta,\tilde\psi_n\big)\leq 2\epsilon,\quad k=1,\dots,k_0.
\end{equation}
Applying the $\e$-test gives a sequence $(F_n,\theta_n,\psi_n)$ in $\prod_{n=1}^\infty X_n$ with 
\begin{equation}
\limsup_nf^{(k)}_n(F_n,\theta_n,\psi_n)=0,\quad k\in\mathbb N.
\end{equation} 
Defining $\phi_n:=\psi_n\circ\theta_n$ and $\phi:A\to B^\infty$ to be the map induced by $(\phi_n)_{n=1}^\infty$ gives the required $\phi$.

With the $\epsilon$-test in place, we now commence the construction of maps satisfying (\ref{eq:Existence2a}) and \eqref{eq:Existence2b}. By Proposition \ref{prop:CP}, for each $a \in \mathcal G$, we may choose a self-adjoint element $c_a \in B^\infty$ such that
\begin{equation} \label{eq:existence.new}
\tau(c_a) = \alpha(\tau)(a), \quad \tau \in T(B^\infty).
\end{equation}
Also, set \begin{equation} \delta := \frac\e{\sqrt{|\mathcal G|}}. \end{equation}

We work with the weak$^*$-closure $\overline{T_\infty(B)}$ of $T_\infty(B)$ which is weak$^*$-compact. Fix $\tau \in \overline{T_\infty(B)}$ for the moment. Consider the finite von Neumann algebra $\mathcal M_\tau:=\pi_\tau(B^\infty)''$.

We claim that $\mathcal M_\tau$ is type II$_1$. As $T(B)$ is compact, any approximate unit $(e_n)_{n=1}^\infty$ for $B$ satisfies $\inf_{\rho\in T(B)}\rho(e_n)\rightarrow 1$ by Dini's Theorem.  Then 
\begin{equation}
\inf_{\rho\in T_\infty(B)}\rho(e_n)=\inf_{\rho\in \overline{T_\infty(B)}}\rho(e_n)\rightarrow 1,
\end{equation} 
and hence $\tau(e_n)\rightarrow 1$.  Thus $\|1_{\mathcal M_\tau}-\pi_\tau(e_n)\|_{2,\tau}\rightarrow 0$ and $\pi_\tau (e_n)$ converges $^*$-strongly to $1_{\mathcal{M}_\tau}$. Suppose that $\mathcal M_\tau$ has a non-zero type I$_k$ summand for some $k\in\mathbb N$, with corresponding central projection $p$, so that $p\mathcal M_\tau$ has a separating family of finite dimensional representations.  Since $B$ has no non-zero finite dimensional representations, we must have $p\pi_\tau(B)=0$.  On the other hand $p\pi_\tau(e_n)\neq 0$ for sufficiently large $n$. This contradiction proves the claim.

Let $T(\pi_\tau): T(\mathcal{M}_\tau) \to T(B^\infty)$ be the map induced by $\pi_\tau$.
By Lemma \ref{l:existence2} (applied to the map $\alpha\circ T(\pi_\tau):T(\mathcal M_\tau)\rightarrow T_{\mathrm{am}}(A)$), there exist a finite dimensional $C^*$-algebra $F_\tau$, a c.p.c.\ map $\theta_\tau:A \to F_\tau$ and a unital $^*$-homomorphism $\eta_\tau:F_\tau \to \mathcal M_\tau$ as well as $x^{(a)}_1,\dots,x^{(a)}_{10},y^{(a)}_1,\dots,y^{(a)}_{10} \in \mathcal M_\tau$ for $a \in \mathcal G$, such that
\begin{gather}
	\label{eq:Existence3a}
	\|\theta_\tau(a)\theta_\tau(b)\| < \e \quad  \text{for }a,b \in \mathcal{F} \text{ satisfying }ab=0,\text{ and} \\
	\label{eq:Existence3b}
	\|\eta_\tau \circ \theta_\tau(a)-\pi_\tau(c_a)- \sum_{i=1}^{10}[x_i^{(a)},y_i^{(a)}]\| < \delta\quad \text{for }a \in \mathcal{G}.
\end{gather}

By the Kaplansky density theorem, at the cost of replacing the norm estimate in \eqref{eq:Existence3b} by a $\|\cdot\|_{2,\tau}$-estimate, we may assume that the elements $x^{(a)}_i,y^{(a)}_i$ belong to $\pi_\tau(B^\infty)$, and thus lift to elements $x^{(\tau,a)}_i,y^{(\tau,a)}_i \in B^\infty$.
Using the order zero Kaplansky density theorem (\cite[Lemma 1.1]{HKW12}) we may approximate $\eta_\tau$ in $\|\cdot\|_{2,\tau}$ by an order zero map $F_\tau \to \pi_\tau(B^\infty)$, and then this can be lifted to an order zero map $\psi_\tau:F_\tau \to B^\infty$ by \cite[Proposition 1.2.4]{Wi09}.
Starting from \eqref{eq:Existence3b}, we can perform these approximations and lifts so that
\begin{equation}
\label{eq:Existence4b}
	\|\psi_\tau(\theta_\tau(a))-c_a- \sum_{i=1}^{10}[x_i^{(a,\tau)},y_i^{(a,\tau)}]\|_{2,\tau} < \delta,\quad a \in \mathcal{G}. \end{equation}
Let us now set
\begin{equation} s_\tau := \sum_{a \in \mathcal G} \Big|\psi_\tau(\theta_\tau(a))-c_a-\sum_{i=1}^{10} [x_i^{(a,\tau)},y_i^{(a,\tau)}]\Big|^2 \in (B^\infty)_+, \end{equation}
so that by \eqref{eq:Existence4b} we get $\tau(s_\tau) < |\mathcal G|\delta^2 = \e^2$.

By continuity and compactness of $\overline{T_\infty(B)}$, there exist $\tau_1,\dots,\tau_k \in \overline{T_\infty(B)}$ such that for every $\tau \in \overline{T_\infty(B)}$,
\begin{equation} \min\{\tau(s_{\tau_1}),\dots,\tau(s_{\tau_k})\} < \e^2. \end{equation}
Set \begin{align} \notag S&:= \psi_{\tau_1}(F_{\tau_1}) \cup \cdots \cup \psi_{\tau_k}(F_{\tau_k}) \cup \{c_a:a \in \mathcal G\} \\
&\qquad\cup \{x^{(a,\tau_i)}_j,y^{(a,\tau_i)}_j: a \in \mathcal G,\ i=1,\dots,k,\ j=1,\dots,10\}, \end{align}
a separable subset of $B^\infty$. Using CPoU as in Lemma \ref{lem:CPoU-Infinity}, there exist orthogonal projections $e_1,\dots,e_k \in B^\infty \cap S'$ which sum to $1_{B^\infty}$ such that
\begin{equation}
\label{eq:Existence5}
 \tau(s_{\tau_i}e_i) \leq \e^2\tau(e_i),\quad \tau \in T_\infty(B). \end{equation}
Set $F:= \bigoplus_{i=1}^kF_{\tau_i}$, and define $\theta:A\to F$ and $\psi:F\to B^\infty$ by
\begin{equation}
\theta(a):=(\theta_{\tau_1}(a),\dots,\theta_{\tau_k}(a)),\quad a\in A,\label{eq:defTheta}
\end{equation}
and
\begin{equation} \psi(x_1,\dots,x_k):=\sum_{i=1}^k e_i\psi_{\tau_i}(x_i),\quad (x_1,\dots,x_k)\in F. \end{equation}
Then \eqref{eq:Existence2a} is an immediate consequence of \eqref{eq:Existence3a}. Since the $e_i$ are orthogonal positive elements commuting with the images of the c.p.\ order zero maps $\psi_{\tau_i}$, it follows that $\psi$ is c.p.\ and order zero.
Moreover $\psi$ is contractive since $\psi(1_A) \leq \sum_{i=1}^k e_i = 1_{B^\infty}$.

Finally, for $a \in \mathcal G$ and $\tau \in T_\infty(B)$, writing $\phi=\psi\circ\theta$, we compute
\begin{eqnarray}
\notag
&&\hspace*{-6em} |\tau(\phi(a))-\alpha(\tau)(a)|^2 \\
&\stackrel{\eqref{eq:existence.new}}=&|\tau(\psi(\theta(a))-c_a)|^2\notag
\\&=&\Big|\tau\Big(\sum_{i=1}^ke_i\Big(\psi_{\tau_i}(\theta_{\tau_i}(a))-c_a-\sum_{j=1}^{10} [x^{(a,\tau_i)}_j,y^{(a,\tau_i)}_j]\Big)\Big)\Big|^2\notag \\
\notag
&\leq& \tau\Big(\sum_{i=1}^k e_i\Big|\psi_{\tau_i}(\theta_{\tau_i}(a))-c_a-\sum_{j=1}^{10} [x^{(a,\tau_i)}_j,y^{(a,\tau_i)}_j]\Big|^2\Big) \\
\notag
&\leq& \tau\Big(\sum_{i=1}^k e_is_{\tau_i}\Big) \\
&\leq& \sum_{i=1}^k \e^2\tau(e_i)  = \e^2,
\end{eqnarray}
where on the third line, we use the fact that the $e_i$ make up a pairwise othogonal partition of unity of projections commuting with $S$. This proves \eqref{eq:Existence2b}, and completes the proof with $\theta_n$ approximately order zero.

When $\alpha(T_\infty(B))$ consists of quasidiagonal traces, \eqref{eq:Existence3a} can be replaced by an $(\mathcal{F},\e)$-approximate multiplicativity condition using the last clause of Lemma \ref{l:existence2}. The map $\theta$, as defined in \eqref{eq:defTheta}, will then satisfy \eqref{eq:Existence2c}, and this completes the proof with $\theta_n$ approximately multiplicative.
\end{proof}

We end by recording how Theorem \ref{thm:MainThm} follows as special cases of the existence and uniqueness results of this section.
\begin{proof}[Proof of Theorem \ref{thm:MainThm}]
This is a consequence of Theorems \ref{thm:Uniqueness} and \ref{thm:Existence}. The CPoU hypothesis on $B$ needed in both these theorems is automatic for separable nuclear $\mathcal Z$-stable $C^*$-algebras by Theorem \ref{thm:CPoU}; moreover $\mathcal Z$-stability is an obstruction to having finite dimensional representations. The hypothesis in Theorem \ref{thm:Existence} that $\alpha$ takes values in the amenable traces on $A$ is automatic as all traces on a nuclear $C^*$-algebra are amenable, essentially by Connes' theorem.\footnote{In fact, all traces are uniformly amenable by \cite[Theorem 3.2.2(5)$\Rightarrow$(1)]{Bro06} and Connes' theorem, and uniformly amenable traces are amenable: see \cite[Section 3.5]{Bro06}.  (Note that \cite{Bro06} works with unital $C^*$-algebras throughout, so to apply these results one should unitise both $A$ and all its traces).}  
\end{proof}

\newcommand{\etalchar}[1]{$^{#1}$}

\newpage
\appendix

\section{Corrigendum to ``classifying maps into uniform tracial sequence algebras''}

In the proof of Theorem~2.6 of \cite{CETW}, we made an error in Equation~(2.44), where we wrote
\begin{align}
\notag
f^{(k)}_n(F_n,\theta_n,\psi_n):=\max_{j\leq k}&\big(\|\theta_n(a_jb_j)\| \\
\tag{2.44}
&+\sup_{\tau\in T(B)}|\tau(\psi_n(\theta_n(x_j))-\alpha(\tau(x_j))|\big).
\end{align}
There is a typo, and instead of $\alpha(\tau(x_j))$, we had meant to write $\alpha(\tau)(x_j)$.
However this would still be incorrect: since $\alpha$ is a map from $T(B^\infty)$ to $T_{\mathrm{am}}(A)$ and we are quantifying over $\tau \in T(B)$, it does not make sense to take $\alpha(\tau)(x_j)$.

The correction involves representing the affine functionals on $T(B^\infty)$ given by $\tau \mapsto \alpha(\tau)(x_j)$ using self-adjoint elements in $B^\infty$.
This idea, of representing the action of the trace via an element of $B^\infty$, is already used (correctly) later in the proof of \cite[Theorem~2.6]{CETW}.

To obtain a correct proof, we need to define $f^{(k)}_n$ differently.
For each $j\in\mathbb N$, using Proposition~1.2 of \cite{CETW} (which is due to Cuntz and Pedersen), we may find a self-adjoint element $c^{(j)} \in B^\infty$ such that $\tau(c^{(j)})=\alpha(\tau)(x_j)$ for all $\tau \in T(B^\infty)$. For each $c^{(j)}$, we choose a representative sequence $(c^{(j)}_n)_{n=1}^\infty$ of self-adjoint elements in $B$.
We then define 
\begin{align}
\notag
f^{(k)}_n(F_n,\theta_n,\psi_n):=\max_{j\leq k}&\big(\|\theta_n(a_jb_j)\| \\
&+\sup_{\tau\in T(B)}|\tau(\psi_n(\theta_n(x_j))-\tau(c^{(j)}_n))|\big).
\end{align}
Then one sees that \cite[(2.45)]{CETW} holds with the new definition of $f^{(k)}_n$, since
\begin{equation}
\begin{array}{rcl}
&& \hspace*{-7em} \limsup_{n\to\infty} \sup_{\tau \in T(B)} |\tau(\tilde\psi_n(\theta(x_j)))-\tau(c^{(j)}_n)| \\
&=& \sup_{\tau \in T_\infty(B)} |\tau(\psi(\theta(x_j))-\tau(c^{(j)})| \\
&=& \sup_{\tau \in T_\infty(B)} |\tau(\psi(\theta(x_j))-\alpha(\tau)(x_j)| \\
&\stackrel{\text{\cite[(2.42)]{CETW}}}\leq& \epsilon.
\end{array} \end{equation} 
Next, given a sequence $(F_n,\theta_n,\psi_n) \in \prod_{n=1}^\infty X_n$ satisfying \cite[(2.46)]{CETW}, define $\phi_n:=\psi_n\circ\theta$ and let $\phi:A \to B^\infty$ be the map induced by $(\phi_n)_{n=1}^\infty$.
Then this is the required $\phi$. Indeed, to prove \cite[(2.38)]{CETW}, it is enough to verify equality on the dense subset $\{x_j:j\in\mathbb N\}$ of $A_{sa}$.
For each $j$, we have
\begin{equation}
\tau\circ\phi(x_j)
= \tau(c^{(j)})  = \alpha(\tau)(x_j),
\end{equation}
for all $\tau \in T_\infty(B)$, hence for all $\tau \in T(B^\infty)$ by \cite[Proposition 2.5]{CETW}, as required.

\section*{Acknowledgments}
We thank Jamie Gabe for bringing this error to our attention.

\end{document}